\DeclarePairedDelimiter\floor{\lfloor}{\rfloor}
\newtheorem{theorem}{Theorem}[section]
\newtheorem{prop}[theorem]{Proposition}
\newtheorem{definition}[theorem]{Definition}
\newtheorem{coro}[theorem]{Corollary}
\newtheorem{conj}[theorem]{Conjecture}
\newtheorem{obs}[theorem]{Observation}
\newtheorem{construction}[theorem]{Construction}
\begin{document}
	
	\begin{center}
		{\large \bf {Efficient domination in Lattice graphs}}\\
		{\large\vspace{0.1in} 
			A. Senthil Thilak\footnote{Corresponding author}, Bharadwaj$^2$}\\
		\vspace{0.1in}
		{\small \it $^{1,2}$ Department of Mathematical and Computational Sciences \\
			National Institute of Technology Karnataka, Surathkal, \\
			Srinivasnagar - 575 025. Mangalore, India}\\
		{\small \it E-mails: $ ^1 $asthilak23@gmail.com, 
			$ ^2 $bwajhsvj@gmail.com}
	\end{center}
	
	\begin{abstract}
		\noindent Given a graph $G$, a subset $S$ of vertices of $G$ is \textit{an efficient dominating set (\textit{EDS})} if $|N[v] \cap S|=1,$ for all $v\in V(G)$. A graph $G$ is \textit{efficiently dominatable} if it possesses an \textit{EDS}. The \textit{efficient domination number} of $G$ is denoted by $F(G)$ and is defined to be $\max \left\{\sum_{v \in S}(1 + \operatorname{deg} v):\right.$ $\left.S \subseteq V(G)\right.$ and $\left.|N[x] \cap S| \leq 1,  \forall~ x \in V(G)\right\}$. In general, not every graph is efficiently dominatable. Further, the class of efficiently dominatable graphs has not been completely characterized and the problem of determining whether or not a graph is efficiently dominatable is NP-Complete. Hence, interest is shown to study the efficient domination property for graphs under restricted conditions or special classes of graphs.  In this paper, we study the notion of efficient domination in some Lattice graphs, namely, rectangular grid graphs ($P_m \Box P_n$), triangular grid graphs, and hexagonal grid graphs. \\ 
		
		\vspace{2mm}
		
		\noindent\textsc{2010 Mathematics Subject Classification:} 05C69,05B40
		
		\vspace{2mm}
		
		\noindent\textsc{Keywords:} Efficient domination, Efficient domination number, Independent perfect domination, $ 2 $-packing, Lattice graphs, Grid graphs.
		
	\end{abstract}
	
	
	

	\section{Introduction}
	
	\noindent Given a graph $G = (V, E)$, a set $S \subseteq V(G)$ is a \textit{dominating set} if each vertex $v \in V(G)$ is either in $S$ or has at least one neighbor in $S$. The size of the smallest dominating set of $G$ is the \textit{domination number of $G$} and is denoted by $\gamma(G)$. The \textit{open neighborhood of a vertex $v$}, denoted by $ N(v)$, is the set of all vertices adjacent to $v$ and the closed neighborhood of $v$, denoted by $ N[v]$ is defined as $ N(v) \cup \{v\}$. A set $S\subseteq V(G)$ is an \textit{efficient dominating set} (\textit{EDS}) of $G$ if $|N[v]\cap S| =1$, for all $v\in V(G)$. That is,  $S$ is an \textit{EDS}, if each vertex $v \in V(G)$ is dominated by exactly one vertex (including itself) in $S$. Not every graph possesses an \textit{EDS}. If a graph $G$ has an \textit{EDS}, then it is said to be \textit{efficiently dominatable.} \\
	
	The \textit{distance} between a pair of vertices $ u $ and $ v $ is the length of the shortest path between $ u $ and $ v $ and is denoted by $ d(u,v)$.  A set $S\subseteq V(G)$ is a \textit{2-packing} if for each pair $u,v \in S$, $N[u]\cap N[v] = \emptyset$. If $S$ is a \textit{2-packing}, then $d(u,v) \geq 3,$ for all $u,v \in S$.  Thus, a dominating set is an \textit{EDS} if and only if it is a $2$-packing. The \textit{influence} of a set $S\subseteq  V(G)$ is denoted by $ I(S) $ and is the number of vertices dominated by $ S $ (inclusive of vertices in $S$). If $S$ is a $2$-packing, then $ I(S)= \sum_{v\in S}  [1+ \deg(v)].$ The maximum influence of a $2$-packing  of $G$ is called the \textit{efficient domination number} of $G$ and is denoted by $F(G)$. That is, $F(G) = \max\{I(S):~$S is a 2-packing$\} = \max \left\{\sum_{v \in S}(1 + \operatorname{deg} v): S \subseteq V(G)\right.$ and $\left.|N[x] \cap S| \leq 1,  \forall~ x \in V(G)\right\}$. Clearly, $0 \le F(G) \le |V(G)|$ and $G$ is efficiently dominatable if and only if $F(G) = |V(G)|$.  A $2$-packing with influence $F(G)$ is called an \textit{$F(G)$-set}. \\
	
	The concept of efficient domination is found in the literature in different names like \textit{perfect codes}  or \textit{perfect $1$-codes} \cite{biggs1973}, \textit{independent perfect domination} \cite{yen1996},  \textit{perfect $ 1 $-domination} \cite{livingston1990} and \textit{efficient domination} \cite{bange1988}. In this paper, we use the terminology ``{efficient domination}'' introduced by Bange et.~al.~\cite{bange1988}. The problem of determining whether $ F(G) = |V(G)| $ is $\mathcal{NP} $-complete on arbitrary graphs \cite{bange1988} as well as on some special/restricted classes of graphs like bipartite graphs, chordal graphs, planar graphs of degree at most three, etc. [\cite{haynes1998}, whereas it is polynomial in the case of trees \cite{bange1988}. In \cite{goddard2000}, Goddard et al. have obtained bounds on the efficient domination number of arbitrary graphs and trees. Efficient Domination has also been studied on different special classes of graphs like chordal bipartite graphs \cite{tang2002}, strong product of arbitrary graphs \cite{abay2009}, cartesian product of cycles \cite{chelvam2011}, etc. Hereditary efficiently dominatable graphs were defined and studied in \cite{milanic2013} and \cite{barbosa2016}. \\
	
	The perfect codes or efficient domination finds wide applications in coding theory, resource allocation in computer networks, etc. (refer to \cite{va2001}, \cite{livingston1988}), while lattice graph structures play a significant role in source and channel coding. Motivated by the applications and the graph theoretical significance of efficient domination and lattice structures, this papers focuses on the study of efficient domination in some special lattice structures, namely, finite rectangular grids ($P_n \Box P_m$, where $ 1 \le n, m < \infty $) (in \ref{rg}), infinite rectangular grids (in \ref{irg}), infinite triangular (in \ref{itg})  and hexagonal grid graphs (in \ref{hex1}). The study on finite cases of triangular and hexagonal grid structures is in progress.  
	
	\section{Main Results} \label{sec1}
	\subsection{Notations and Terminologies}
	\begin{definition} \cite{imrich2000}
		The \textit{cartesian product} of two graphs $G = (V_1, \, E_1)$ and $H= (V_2, \, E_2)$, denoted by $G\Box H$, is the graph with vertex set  $V_{1}\times V_{2}$ in which two vertices $(u_1,v_1)$ and $ (u_2, \, v_2)$ are adjacent if and only if either (i) $u_{1}=u_{2}$ and $v_{1}v_{2}\in E_{2}$ or (ii) $u_{1}u_{2}\in E_{1}$ and $v_{1}=v_{2}$. \\
		The graphs $ G $ and $ H $ are called the \textit{factors of $ G \Box H $.} For $ v \in V(H) $, the subgraph of $ G \Box H $ induced by $ \{(u, v) \in V(G \Box H): u \in V(G)\} $ is called the \textit{G}-\textit{layer of $ G \Box H$} with respect to $ v $ and is denoted by $ G^{(v)} $.  Analogously, the $ H $-\textit{layer}, namely, $ H^{(u)} $ is defined for each $ u \in V(G). $  
	\end{definition}
	
	In literature, the cartesian product of two paths is referred by different terminologies like \textit{grid} graphs, \textit{rectangular grid graphs}, \textit{two-dimensional lattice} graphs, etc.~\cite{Acharya1981}. In this paper, we use \textit{rectangular grid graph} to refer to the carterisan product of two paths $ P_n $ and $ P_m $.  \\
	In the discussions to follow, we consider three categories of rectangular grids: 
	\begin{itemize}
		\item[(i)] Those grids bounded on all four sides, referred to as \textbf{finite rectangular grids}, denoted by $ P_n \Box P_m $, where $ 1 \le n, m < \infty $.
		\item[(ii)] Those grids bounded on three sides (top, left and right) or two sides (top and left) and unbounded on the other sides, denoted respectively as \textbf{$ \bf P_n \Box P_\infty $, where $ \bf n \ge 1 $} and \textbf{$ \bf P_\infty \Box P_\infty $}.
		\item[(iii)] Those grids unbounded on all four sides, referred to as \textbf{infinite rectangular grids.}
	\end{itemize}
	
	Throughout this paper, we refer to vertices that are not dominated by a set as \textit{voids}. In the figures given throughout this paper, \textit{shaded circular dots of larger size} represent \textit{dominating vertices}, \textit{shaded circular dots of smaller size} denote \textit{vertices dominated by a set (excluding self-dominating vertices)} and \textit{non-shaded circular dots} correspond to \textit{voids}. \\
	\subsection{Finite lattice graphs}
	
	\subsubsection{Finite rectangular grid graphs} \label{rg}
	
	It is known that if a graph is efficiently dominatable, then all its EDSs are of same size and is equal to $ \gamma(G) $ \cite{bange1978, bange1988}. The domination number of rectangular grid graphs has been studied in \cite{chang1993}, \cite{gravier1997}.  In the discussions to follow, we give constructive characterizations for efficiently dominatable lattice graphs.  In the case of graphs which are not efficiently dominatable, we obtain either the exact value or bounds of efficient domination number.
	
	In what follows, assume that $V(P_n\Box P_m) = \{v_{i,j}|1 \leq i\leq m, 1 \leq j \leq n\}$, unless mentioned otherwise. For convenience, we label the $ m $ $ P_n $-layers of $ P_n \Box P_m $ as $R_1, R_2, \dots R_m$ and its $n$ $ P_m $-layers as $C_1,C_2, \dots C_n$, respectively.  That is, for each $ i $ $(1 \le i \le m),$ $ R_i \cong P_n $ and for each $ j $ $ (1 \le j \le n),$ $ C_j \cong P_m $.  
	Further, it is noted that the distance between any two vertices $v_{i,j}$ and $v_{p,q}$ is $|i-p|+|j-q|$.  With these conventions, the results discussed below lead to characterizations of efficiently dominatinable rectangular grids.
	
	\begin{theorem}\label{th1}
		For $ n \ge 1 $, $P_n \Box P_2$ is efficiently dominatable if and only if $ n $ is odd.
	\end{theorem}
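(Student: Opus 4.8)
The plan is to handle the two implications separately: an explicit construction for the ``if'' part, and a short parity/recurrence argument for the ``only if'' part. Throughout I would use the coordinates of the excerpt, so that $P_n\Box P_2$ has vertices $v_{i,j}$ with $i\in\{1,2\}$ and $1\le j\le n$, with $v_{i,j}$ adjacent to $v_{i,j\pm 1}$ and to $v_{3-i,j}$; the four corners have degree $2$ and all other vertices degree $3$.

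For the ``if'' direction, suppose $n=2k+1$. I would exhibit the set $S$ containing exactly one vertex in each odd-indexed column, alternating rows: $S=\{v_{1,1},v_{2,3},v_{1,5},v_{2,7},\dots\}$ (so $v_{1,j}\in S$ when $j\equiv 1\pmod 4$ and $v_{2,j}\in S$ when $j\equiv 3\pmod 4$). Each odd column contains a member of $S$, which dominates both of its vertices and nothing else of that column; each even column $C_{2t}$ has its row-$r$ vertex dominated by the member of $S$ in $C_{2t-1}$ and its other vertex dominated by the member of $S$ in $C_{2t+1}$, and since consecutive members of $S$ lie in opposite rows, these two hits fall in different rows. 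The boundary columns $C_1$ and $C_n$ are odd, hence self-covered. Verifying $|N[v]\cap S|=1$ for every $v$ is then a routine case split (``$v$ in an odd column'' versus ``$v$ in an even column''), giving $F(P_n\Box P_2)=2n$, i.e.\ efficient dominatability.

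For the ``only if'' direction, assume $S$ is an EDS and encode it by indicators $a_j=1$ if $v_{1,j}\in S$ and $b_j=1$ if $v_{2,j}\in S$, with the zero-padding convention $a_0=a_{n+1}=b_0=b_{n+1}=0$. The requirement $|N[v_{1,j}]\cap S|=1$ reads $a_{j-1}+a_j+a_{j+1}+b_j=1$, and symmetrically $b_{j-1}+b_j+b_{j+1}+a_j=1$, for all $j$. Subtracting the two and setting $c_j=a_j-b_j\in\{-1,0,1\}$ gives the recurrence $c_{j-1}+c_{j+1}=0$. From $c_0=0$ all even-indexed $c_j$ vanish, while the odd-indexed ones satisfy $c_{2t+1}=(-1)^t c_1$. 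When $n$ is even, $n+1$ is odd, so $c_{n+1}=0$ forces $c_1=0$, hence $c_j=0$ for all $j$, i.e.\ $a_j=b_j$ for every $j$. Substituting back into $a_{j-1}+a_j+a_{j+1}+b_j=1$ yields $2a_j+a_{j-1}+a_{j+1}=1$, which over $\{0,1\}$-valued entries forces $a_j=0$, hence $a_j=b_j=0$ for all $j$ and $S=\emptyset$ — contradicting that $P_n\Box P_2$ has vertices needing to be dominated. Thus no EDS exists for even $n$.

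The construction side is essentially bookkeeping; the main point to get right in the converse is the exact shape of the coverage equations at the two boundary columns under the zero-padding convention, together with the observation that the single ``row-imbalance'' quantity $c_j$ already carries all the parity information, collapsing the argument to a two-step recurrence and a boundary value. As a sanity check I would confirm that for odd $n$ the same recurrence is consistent (there $c_1=\pm1$ is permitted, matching the construction above), which pinpoints evenness of $n$ as exactly where the obstruction appears.
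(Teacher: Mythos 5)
Your proof is correct. The ``if'' direction is essentially the paper's: the same alternating-row, period-four pattern $\{v_{1,j}: j\equiv 1 \pmod 4\}\cup\{v_{2,j}: j\equiv 3\pmod 4\}$, verified column by column. The ``only if'' direction, however, takes a genuinely different route. The paper argues by forced propagation: starting at the corner $v_{1,1}$ it shows (after ruling out $v_{1,2}\in S$ and invoking symmetry) that the entire $2$-packing is determined step by step, and the unique candidate leaves exactly one vertex of the last column undominated. Your argument instead linearizes the problem: the closed-neighborhood equations $a_{j-1}+a_j+a_{j+1}+b_j=1$ and $b_{j-1}+b_j+b_{j+1}+a_j=1$ subtract to the two-step recurrence $c_{j-1}+c_{j+1}=0$ on the row-imbalance $c_j=a_j-b_j$, and the zero boundary values at $j=0$ and $j=n+1$ kill $c$ entirely when $n$ is even, after which $2a_j+a_{j-1}+a_{j+1}=1$ has no $\{0,1\}$ solution. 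This is cleaner in two respects: it avoids the ``without loss of generality'' symmetry step and the case chase, and it makes transparent exactly where parity enters (the boundary condition $c_{n+1}=0$ lands on an odd index precisely when $n$ is even, while for odd $n$ it is vacuous and $c_1=\pm 1$ reproduces the construction). What your route does not deliver, and the paper's forcing argument does as a byproduct, is the exact value $F(P_n\Box P_2)=2n-1$ for even $n$ --- the paper's unique maximal chase exhibits a $2$-packing missing only one vertex, which is recorded in the proof even though it is not part of the theorem statement. For the statement as given, your argument is complete.
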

	
	\begin{proof}
		The result is trivially true if $ n =1 $. So, assume that $ n > 1 $ and $ n $ is odd. That is, $n=2k+1$ for some natural number $k$. Then, $\{v_{1,j}| j=1,5,\dots, 4\floor
		{\frac{k}{2}}+1 \} \cup \{(v_{2,j})| j=3,7,\dots, 4\floor
		{\frac{k-1}{2}}+3 \}$ is an \textit{EDS} of $ P_n \Box P_2 $ and hence, it is efficiently dominatable.
		\begin{figure}[!h]
			\centering
			\includegraphics[height=2cm]{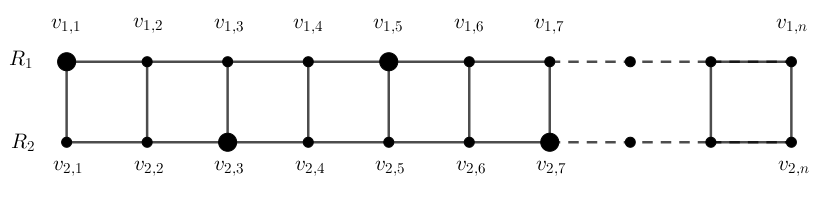}
			\caption{$P_n \Box P_2$}
			\label{Fig_PnP2}
		\end{figure}
		Suppose n is even, then we show that $ F(P_n \Box P_2) < 2n. $ Let $n=2k$, for some natural number $k$ and $S$ be an $F(P_n \Box P_2)$-set.  If possible, assume that $I(S)=2n$. Then, by definition, $ |N[v_{i, j}] \cap S| = 1 $, for each $ v_{i, j} \in V(P_n \Box P_2). $
		
		Consider an arbitrary vertex, say $ v_{1, 1} $. Since $ I(S) = 2n $, to dominate $ v_{1, 1} $, either $ v_{1,1} \in S $ or $ v_{1, 2} \in S $ or $ v_{2, 1} \in S $.  Suppose $ v_{1, 2} \in S $, then $ N[v_{2,1}] \cap S = \emptyset $, which is a contradiction.
		Hence, either $ v_{1,1} \in S $ or $ v_{2, 1} \in S $.  Without loss of generality, let $ v_{1, 1} \in S $.  Then, progressively including vertices in $ S $ (refer to figure \ref{Fig_PnP2}), it can be observed that to dominate $v_{2,2}$, $v_{2,3}$ must be in $ S $. Next, to dominate $v_{1,4}$,  $v_{1,5} \in S.$ Continuing this pattern, we get $S = \{v_{1,j}| j= 1,5,9, \dots ,4\floor{\frac{k-1}{2}}+1\} \cup \{v_{2,j}| j= 3,7,11, \dots,4\floor{\frac{k}{2}}-1\}$. If $k$ is odd, then $ S $ dominates all vertices except $v_{2,n}$. If $k$ is even, then $ S $ dominates all vertices but $v_{1,n}$. In either case, $I(S) = 2n-1$, which is a contradiction. Therefore, $ P_n \Box P_2 $ is not efficiently dominatable when $ n $ is even and in particular, $F(P_n \Box P_2) = 2n-1$.  
	\end{proof}
	
	
	In the next theorem, to study efficient domination in $P_n \Box P_3$, the grid is partitioned columnwise into $k$ blocks, say, $B_1, B_2, \dots B_k$, such that each $ B_i $ $(1 \le i \le k-1)$ is of size $3 \times 3 $ and $B_k$ is of size either $3 \times 3$ or $3 \times 4$ or $3 \times 5$, depending on whether $n \equiv 0\,\,or\,\,1\,\,or\,\,2\,\,(mod\,\,3)$, respectively.  Here, we refer a block $B_i$ to be \textit{internal} if it is adjacent to two other blocks (namely, $B_{i-1}$ and $B_{i+1}$) and a \textit{terminal block} if it is adjacent to only one block (to the left or right).
	\begin{obs}\label{obs_P3P3}
		Clearly, as observed in figure \ref{fig_P3P3}, $ P_3 \Box P_3 $ (or any $ 3 \times 3 $ block) is not efficiently dominatable and $ F(P_3 \Box P_3) = 7 $, resulting in two voids.  And, if a $ 3 \times 3 $ block occurs as an internal block or a terminal block, then out of its two voids, atmost one can be dominated by an adjacent block in $ P_n \Box P_3 $. 
		Therefore, any $3 \times 3$ block in $P_n \Box P_3$ contains at least one void and this leads to a total of at least $\floor{\frac{n}{3}}$ voids in $ P_n \Box P_3$. 
	\end{obs}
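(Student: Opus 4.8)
The plan is to verify the three assertions of the observation in turn. For $F(P_3 \Box P_3)=7$, the structural point is that every $2$-packing $S$ of $P_3 \Box P_3$ has $|S|\le 2$: each of the three columns is a copy of $P_3$, which has diameter $2$, so $S$ meets each column in at most one vertex, and a one-line distance check shows one cannot place $S$-vertices in all three columns at pairwise distance $\ge 3$. Among $2$-packings of size at most two, the influence $\sum_{v\in S}(1+\deg v)$ is maximised by never using the degree-$4$ centre (it lies within distance $2$ of every vertex, hence cannot be paired) and by using at most one of the four degree-$3$ edge-midpoints (which are pairwise at distance $2$); this forces $I(S)\le (1+3)+(1+2)=7$, a value attained for instance by $S=\{v_{1,2},v_{3,1}\}$, whose only voids are $v_{2,3}$ and $v_{3,3}$. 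Hence $F(P_3 \Box P_3)=7<9=|V(P_3 \Box P_3)|$, so $P_3 \Box P_3$ (and any $3\times 3$ block) is not efficiently dominatable, and any $F$-set leaves exactly two voids.

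For the second assertion, let $S$ be a $2$-packing of $P_n \Box P_3$ and let $B$ be a $3\times 3$ block on columns $c-2,c-1,c$. Put $S_B=S\cap V(B)$; since distances inside $B$ are at least the corresponding distances in $P_n \Box P_3$, the set $S_B$ is a $2$-packing of $B$, so the number of vertices of $B$ dominated by $S_B$ equals $I_B(S_B)\le F(B)=7$, leaving at least two ``block-voids''. A block-void can be dominated by $S\setminus S_B$ only if it lies in a boundary column $c-2$ or $c$ of $B$ and its row-mate in the adjacent column $c-3$ (respectively $c+1$) belongs to $S$; moreover that adjacent column is again a copy of $P_3$ and hence contains at most one vertex of $S$. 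The crux is a finite check: enumerate, up to the symmetries of the square, all $F(B)$-sets --- each is of the form $\{$an edge-midpoint, one of its two far corners$\}$ --- and observe that in every case the two resulting voids either both lie in a single boundary column of $B$, or one of them lies in the central column $c-1$. In the first situation the single adjacent column that could help holds at most one vertex of $S$, so at most one of the two voids is rescued; in the second situation the central-column void cannot be rescued at all. If $S_B$ is not an $F(B)$-set there are at least three block-voids and at most two possible external rescues (one per boundary column), so a void still survives. In all cases $B$ retains at least one void, whether it arises as an internal or a terminal block.

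Finally, to count globally, partition the $n$ columns of $P_n \Box P_3$ into $\floor{\frac{n}{3}}$ pairwise-disjoint $3\times 3$ blocks, discarding the at most two leftover columns; by the previous step each such block retains at least one void under $S$, and disjointness gives at least $\floor{\frac{n}{3}}$ voids in $P_n \Box P_3$. (In particular this already yields $F(P_n \Box P_3)\le 3n-\floor{\frac{n}{3}}$.)

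I expect the second stage to be the main obstacle. The naive estimate --- at most $7$ vertices of $B$ dominated internally, plus at most one external rescue from each of the two sides --- only gives $9$, i.e.\ no forced void, so the whole argument rests on the fact that the two voids of an influence-$7$ block-configuration are never split one per boundary column. Carrying out the enumeration of $F(B)$-sets cleanly, and correctly handling the boundary effects (the end blocks, small $n$, and the $S$-vertices that a ``rescuing'' column itself contributes to its own block), is the delicate part; the underlying distance computations are routine.
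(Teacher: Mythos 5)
Your proof is correct and follows the same block-decomposition strategy as the paper, which states this result as an observation justified only by a figure. In fact you supply exactly the two justifications the paper leaves implicit: the enumeration showing that the two voids of any influence-$7$ configuration of a $3\times 3$ block are never split one per boundary column (so a single adjacent $P_3$-column, carrying at most one $S$-vertex, can rescue at most one of them, and a central-column void can never be rescued), and the counting argument covering blocks whose internal influence falls below $7$.
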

	\begin{figure}[!h]
		\centering
		\includegraphics{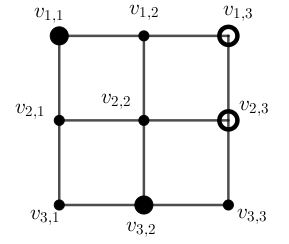}
		\caption{Efficient domination in $P_3 \Box P_3$}
		\label{fig_P3P3}
	\end{figure}
	In Theorem \ref{thm_PnP3} we show the existence of a $ 2 $-packing that results in exactly $\floor{\frac{n}{3}}$ voids in $ P_n \Box P_3$.  Thereby, it is proved that $F(P_n \Box P_3) = 3n - \floor{\frac{n}{3}}$ and hence, it is not efficiently dominatable.  Further, in Theorem \ref{thm_PnP3}, for ease of reference we follow a different labeling for the vertices of $ P_n \Box P_3 $, than the one mentioned earlier in this section.  Upon partitioning the vertices into blocks, label the vertices of each block in $ P_n \Box P_3 $ as shown in figure \ref{fig_BL}. Note that vertices at same positions in different blocks receive similar labels, but are distinguished in terms of the block they belong to. Similar pattern is extended for blocks of larger size. 
	\begin{figure}[!h]
		\centering
		\includegraphics{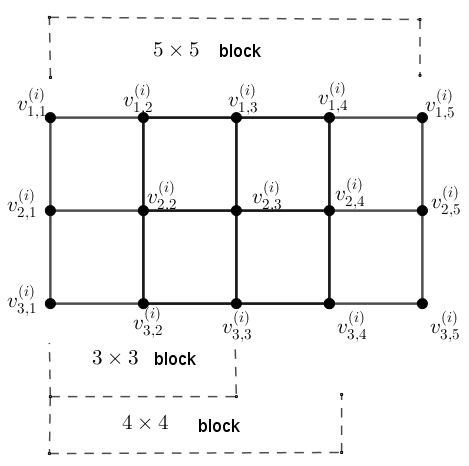}
		\caption{Labeling of vertices of block $B_i$ in $ P_n \Box P_3 $}
		\label{fig_BL}
	\end{figure}
	
	\begin{theorem} \label{thm_PnP3}
		$P_n \Box P_3$ is not efficiently dominatable and $F(P_n \Box P_3)=3n-\floor{\frac{n}{3}}$.
	\end{theorem}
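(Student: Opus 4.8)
The plan is to bound $F(P_n \Box P_3)$ from above and below separately, the upper bound being essentially immediate and the lower bound requiring an explicit $2$-packing. For the upper bound, Observation~\ref{obs_P3P3} already guarantees that every $2$-packing of $P_n \Box P_3$ leaves at least $\floor{\frac{n}{3}}$ voids (one surviving void per $3\times 3$ block of the column partition, since at most one of a block's two forced voids can be dominated from a neighbouring block). Hence $I(S) \le 3n - \floor{\frac{n}{3}}$ for every $2$-packing $S$, i.e. $F(P_n \Box P_3) \le 3n - \floor{\frac{n}{3}}$. It then remains only to produce a $2$-packing whose influence attains this value; once that is done, $P_n \Box P_3$ fails to be efficiently dominatable because $3n - \floor{\frac{n}{3}} < 3n = |V(P_n \Box P_3)|$ as soon as $\floor{\frac{n}{3}} \ge 1$.

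For the matching lower bound I would construct, block by block along the partition $B_1, \dots, B_k$, a $2$-packing $S$ that creates exactly one void inside each $3\times 3$ block and no voids elsewhere. In each internal $3\times 3$ block place the two dominators in the optimal configuration of Figure~\ref{fig_P3P3}, but alternate its orientation from one block to the next, so that of the two voids a block would generate in isolation, one is dominated by a dominator sitting in the adjacent block, leaving a single residual void per $3\times 3$ block; the orientations are chosen precisely so that the dominator a block places nearest a junction is at distance at least $3$ from the dominator the neighbouring block places nearest the same junction. The terminal block $B_k$ — of size $3\times3$, $3\times4$ or $3\times5$ according to $n\equiv 0,1,2 \pmod 3$ — gets a tailored placement absorbing the boundary discrepancy (for instance a $3\times 5$ terminal block is covered by four dominators leaving a single interior void, which is a short hand computation), arranged so the total number of residual voids over all of $P_n\Box P_3$ is exactly $\floor{\frac{n}{3}}$. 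The block labelling of Figure~\ref{fig_BL} lets one write all these placements down uniformly and read off both the relevant distances and the list of residual voids.

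The verification then breaks into three routine but careful parts: (i) $S$ is a $2$-packing, which reduces to checking, for each pair of consecutive blocks and for the junction with $B_k$, that the two dominators flanking that junction are at distance at least $3$; (ii) every vertex other than the prescribed residual voids is dominated, a local check inside each block together with the claimed cross-block domination at the junctions; and (iii) the number of residual voids is exactly $\floor{\frac{n}{3}}$, which follows by counting one per $3\times 3$ block and verifying $B_k$ contributes none beyond that. Combining (i)--(iii) gives $I(S) = 3n - \floor{\frac{n}{3}}$, hence $F(P_n \Box P_3) = 3n - \floor{\frac{n}{3}}$, and non-efficient-dominatability follows as noted. (For the few small $n$ where the block partition is not meaningful the claim should be read with $n$ large enough, i.e. $n\ge 4$, the remaining cases being immediate or following from Theorem~\ref{th1}.)

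The main obstacle is the bookkeeping at the block junctions and, above all, the terminal block: the naive periodic pattern (single dominators alternating between the top and bottom rows) loses one extra void at the right boundary, so the construction must soak up that surplus inside $B_k$ via a case split on $n\bmod 3$. Making this uniform across the three congruence classes while ensuring $B_k$ never introduces a distance-$2$ conflict with $B_{k-1}$ — and while not accidentally creating a second residual void in $B_{k}$ — is the delicate step, and is exactly what the explicit labelling in Figure~\ref{fig_BL} is set up to control.
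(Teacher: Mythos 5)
Your proposal follows the paper's proof essentially verbatim: the upper bound $F(P_n\Box P_3)\le 3n-\floor{\frac{n}{3}}$ is read off from Observation~\ref{obs_P3P3}, and the lower bound is established by a blockwise $2$-packing leaving exactly one residual void per block, with a case split on $n \bmod 3$ for the terminal block(s). The one detail you left open differs from the paper: its internal blocks all use the \emph{same} pair $\{v_{1,1}^{(i)},v_{3,2}^{(i)}\}$ (so that $v_{1,1}^{(i+1)}$ absorbs the void $v_{1,3}^{(i)}$) rather than alternating orientations --- a literal mirror-alternation would place the dominators flanking a junction at distance $2$ --- but since you only stipulate the constraints such a placement must satisfy, this does not change the approach.
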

	
	\begin{proof}
		An $ F(P_n \Box P_3)$-set is obtained by choosing vertices blockwise as follows: As explained earlier, at most two vertices can be chosen from any $ 3 \times 3 $ block.  Let $ S = \{v_{1,1}^{(i)}, v_{3,2}^{(i)}: 1 \le i \le k-2\} $.  It can be observed from figure \ref{fig_P3P12} that by the above choice of two vertices from each $ B_i\,\,(1 \le i \le k-3) $, the vertex $v_{1,3}^{(i)}$ that appeared as void in $B_i$ is later dominated by $v_{1,1}^{(i+1)}$.  This results in exactly one void, namely $v_{2,3}^{(i)}$ in each $ B_i\,\,(1 \le i \le k-3) $.
		Next, based on the value of $n$, suitable vertices are chosen from $B_{k-1}$ and $B_k$ as explained below: \\
		\textbf{Case (i):} $n \equiv 0\,\,(mod \,\,3)$\\
		Let $ S' = \{v_{2,1}^{(k-1)}, v_{1,3}^{(k-1)}\} \cup \{v_{3,1}^{(k)}, v_{2,3}^{(k)}\}$.  By this choice, the voids $v_{2,3}^{(k-2)}$, $ v_{3,3}^{(k-1)} $ and $ v_{1,1}^{(k)} $ that were in $ B_{k-2} $, $ B_{k-1} $ and $ B_{k} $ are later dominated by $v_{2,1}^{(k-1)}$, $ v_{3,1}^{(k)} $ and $ v_{1,3}^{(k-1)} $ respectively (refer to figure \ref{fig_P3P12}). So, $ S' $ results in exactly one void each in $ B_{k-2} $, $ B_{k-1} $ and $ B_{k} $. Totally there are $\floor{\frac{n}{3}}$ blocks and $ S \cup S' $ results in one void in each block.  Hence, $ S \cup S' $ is a $ 2 $-packing with influence $ 3n-\floor{\frac{n}{3}} $ in $ P_n \Box P_3 $. \\
		\begin{figure}[!h]
			\centering
			\includegraphics[width=12cm]{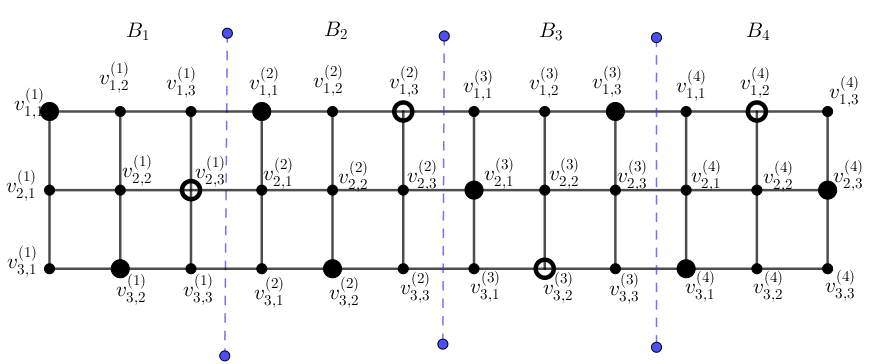}
			\caption{$P_{12} \Box P_{3}$}
			\label{fig_P3P12}
		\end{figure}\\
		\noindent \textbf{Case (ii):} $n \equiv 1 \,\,(mod\,\,3)$\\ 
		In this case, the last block $ B_k $ is of size $ 4 \times 4 $ (refer to figure \ref{fig_P3P13}).  Let $ S'' = \{v_{1,1}^{(k-1)}, v_{3,2}^{(k-1)}\} \cup \{v_{1,1}^{(k)}, v_{3,2}^{(k)}, v_{2,4}^{(k)}\}$.
		Then, by a similar argument as in case (i), $ S \cup S'' $ is a $ 2 $-packing with influence $ 3n-\floor{\frac{n}{3}} $ in $ P_n \Box P_3 $, when $n \equiv 1 \,\,(mod\,\,3)$ (refer to figure \ref{fig_P3P13}).
		\begin{figure}[!h]
			\centering
			\includegraphics[width=12cm]{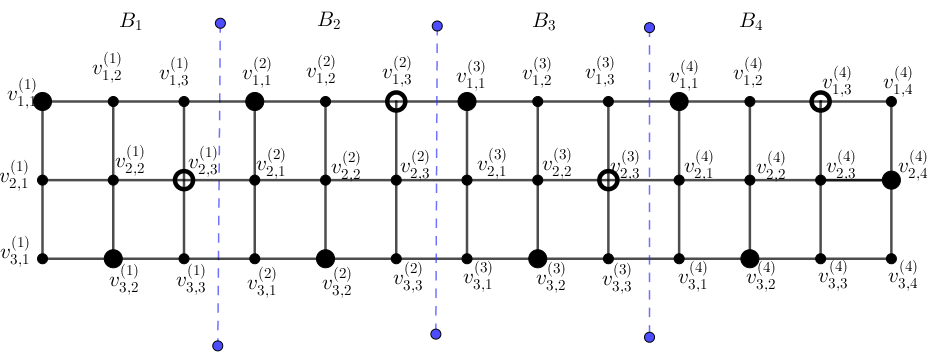}
			\caption{$P_{13} \Box P_{3}$}
			\label{fig_P3P13}
		\end{figure}\\
		
		\noindent \textbf{Case (iii):} $n \equiv 2 \,\,(mod\,\,3)$\\ 
		In this case, $ B_k $ is of size $ 3 \times 5 $.  With $ S''' = \{v_{1,1}^{(k-1)}, v_{3,2}^{(k-1)}\} \cup \{v_{1,1}^{(k)}, v_{3,2}^{(k)}, v_{1,4}^{(k)}, v_{3,5}^{(k)}\} $, it can be shown by a similar argument as in case (i) that $ S \cup S''' $ is a $ 2 $-packing with influence $ 3n-\floor{\frac{n}{3}} $ in $ P_n \Box P_3 $, when $n \equiv 2 \,\,(mod\,\,3)$ (refer to figure \ref{fig_P3P14}).
		\begin{figure}[!h]
			\centering
			\includegraphics[width=12cm]{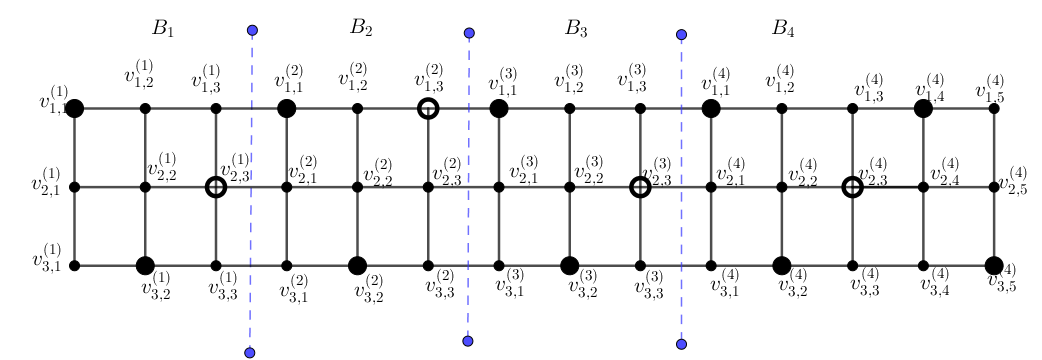}
			\caption{$P_{14} \Box P_{3}$}
			\label{fig_P3P14}
		\end{figure}\\
		Thus, in each case, $ P_n \Box P_3 $ has a $ 2 $-packing with influence $3n - \floor{\frac{n}{3}}$ and it follows from Observation \ref{obs_P3P3} that it is the maximum influence of $ P_n \Box P_3$.  Hence, the result follows.
	\end{proof}
	The next proposition deals with efficient domination in square grids of sizes $ 4 $, $ 5 $, and $ 6 $ and the results follow trivially (refer to figures \ref{fig_P4P4}, \ref{fig_P5P5} and \ref{fig_P6P6}).
	\begin{prop}\label{prop_P4To6}
		\hfill
		\begin{itemize}
			\item[(i)] $ P_4 \Box P_4$ is efficiently dominatable.
			\item[(ii)] $ P_5 \Box P_5 $ is not efficiently dominatable and $ F(P_5 \Box P_5) = 23. $
			\item[(iii)] $ P_6 \Box P_6 $ is not efficiently dominatable and $ F(P_6 \Box P_6) = 33. $ 
		\end{itemize}
	\end{prop}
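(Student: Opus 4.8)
The plan is to treat the three items separately; in each case the substantive content is carried by a figure, and the argument has the shape: display an optimal $2$-packing, then show that nothing better exists.

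For (i) the plan is simply to exhibit an EDS. Concretely, take $S=\{v_{1,2},v_{2,4},v_{3,1},v_{4,3}\}$, the pattern of Figure~\ref{fig_P4P4}: all four vertices have degree $3$, and a direct check shows that the closed neighbourhoods $N[v_{1,2}],N[v_{2,4}],N[v_{3,1}],N[v_{4,3}]$ are pairwise disjoint and together exhaust all $16$ vertices, so $S$ is an EDS and $P_4\Box P_4$ is efficiently dominatable. Nothing further is needed.

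For (ii) the lower bound $F(P_5\Box P_5)\ge 23$ comes from the $2$-packing of Figure~\ref{fig_P5P5}; for instance $S=\{v_{2,3},v_{1,1},v_{1,5},v_{4,5},v_{5,3},v_{4,1}\}$ is readily verified to be a $2$-packing whose only undominated vertices are $v_{3,2}$ and $v_{3,4}$, with influence $5+3+3+4+4+4=23$. For the matching upper bound I would proceed as follows. Given any $2$-packing $S$, let $a,b,c$ count the corners (degree $2$), non-corner boundary vertices (degree $3$) and interior vertices (degree $4$) in $S$, so that $I(S)=3a+4b+5c$. One first shows $c\le 2$: the interior of $P_5\Box P_5$ is the $3\times3$ block $\{2,3,4\}^2$, and no three of its vertices are pairwise at distance $\ge 3$ (two in a common row or column are within distance $2$, and ruling that out forces a permutation-type placement in which the middle vertex is still too close to a neighbour). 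Next, the $a+b$ boundary vertices of $S$ form a $2$-packing of the $16$-cycle bounding $P_5\Box P_5$, while each interior vertex of $S$ excludes a block of boundary vertices from membership in $S$; a case split on $c\in\{0,1,2\}$, bounding the maximum of $3a+4b$ over boundary $2$-packings that avoid the excluded vertices, gives $I(S)\le 23$ in every case (equality being forced to the $c=1$ shape of Figure~\ref{fig_P5P5}). Since $23<25$, this also shows $P_5\Box P_5$ is not efficiently dominatable, and hence $F(P_5\Box P_5)=23$.

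Part (iii) follows the same template. The bound $F(P_6\Box P_6)\ge 33$ comes from the $2$-packing of Figure~\ref{fig_P6P6}, which leaves three voids. For the upper bound I would partition $P_6\Box P_6$ into its four $3\times3$ blocks; as established in the proof of Theorem~\ref{thm_PnP3}, at most two vertices of $S$ lie in any block, and by Observation~\ref{obs_P3P3} a block carrying two vertices of $S$ has at least two of its vertices undominated from within the block. A first count thus produces at least eight ``local'' voids, and the remaining work is to bound how many of these can be salvaged by vertices of $S$ lying in the other three blocks: only voids on the side a block shares with a neighbour are eligible, and proceeding side by side through the internal block boundaries, recording which positions a rescuing vertex of $S$ may occupy (those positions being themselves constrained by the neighbouring block's own influence-$7$ configuration), one shows that at least three voids persist. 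Hence $F(P_6\Box P_6)=33$ and $P_6\Box P_6$ is not efficiently dominatable. I expect the cross-block rescue accounting to be the main obstacle: the constructions and part (i) are routine and the $P_5$ upper bound is a contained case analysis, but obtaining the exact deficit $3$ for $P_6$ (rather than merely ``$\ge 1$'') requires controlling precisely how the four blocks interact along their shared sides.
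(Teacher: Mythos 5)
Your lower-bound half coincides with everything the paper actually provides: the paper's ``proof'' of this proposition consists of the single remark that the results ``follow trivially'' from figures \ref{fig_P4P4}, \ref{fig_P5P5} and \ref{fig_P6P6}, i.e.\ it exhibits the three configurations and asserts optimality without argument. Your set for (i) is a genuine EDS (the four closed neighbourhoods partition the 16 vertices), and your six-vertex $2$-packing for (ii) does have influence $23$ with voids exactly at $v_{3,2}$ and $v_{3,4}$, so on the constructive side you match the paper and supply the verification it omits.

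Where you go beyond the paper is the upper bounds, and there the work is outlined rather than done. Your structural lemmas are correct: no three vertices of a $3\times 3$ block are pairwise at distance $\ge 3$ (so $c\le 2$ for $P_5\Box P_5$ and at most two $S$-vertices per block of $P_6\Box P_6$), and the weighting $I(S)=3a+4b+5c$ is the right bookkeeping. But the decisive inequalities are asserted, not derived. For instance, in the $c=1$ case of (ii) a crude count allows five boundary vertices in $S$; one needs the observation that five boundary vertices pairwise at distance $\ge 3$ on the bounding $16$-cycle must include a corner (their positions cover all residues mod $4$), together with the exclusions imposed by the interior vertex, to push $5+3a+4b$ down from $24$ to $23$. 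Likewise for (iii), ``at least three of the eight local voids persist'' is precisely the hard part, as you acknowledge, and no mechanism is given for it. Since these are fixed finite graphs, an exhaustive check closes every such gap, so the plan certainly terminates in a proof; but as written, parts (ii) and (iii) are proof strategies with the critical case analyses still owed. That said, even this incomplete version is more of an argument than the paper offers for its claimed exact values.
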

	\begin{figure}[!h] \centering
		\begin{subfigure}{0.45\textwidth}
			\centering
			\includegraphics[scale=0.9]{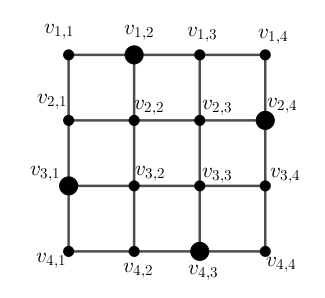}
			\caption{\centering $P_4 \Box P_4$}
			\label{fig_P4P4}
		\end{subfigure}
		\begin{subfigure}{0.45\textwidth}
			\includegraphics[scale=0.9]{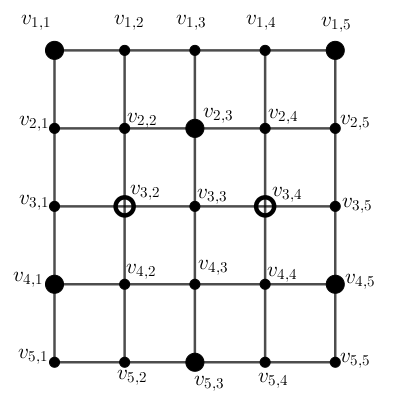}
			\caption{\centering $P_5 \Box P_5$}
			\label{fig_P5P5}
		\end{subfigure}
		\\
		\vspace{0.7cm}
		\begin{subfigure}{0.95\textwidth}
			\centering 
			\includegraphics[scale=0.9]{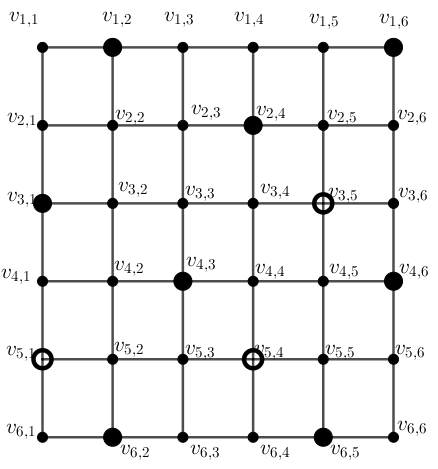}
			\caption{\centering $P_6 \Box P_6$ }
			\label{fig_P6P6}
		\end{subfigure}
		\caption{Efficient domination in square grids of sizes 4, 5, and 6}  
	\end{figure}
	Next in Theorem \ref{th3}, we discuss the notion of efficient domination in $ P_n \Box P_m $, for $ n, m \ge 7 $.  Later using the proof technique adopted in this theorem and Proposition \ref{prop_P4To6}, we characterize the efficiently dominatable rectangular grids $ P_n \Box P_m $, for $ n, m \ge 3 $.  The following observation supports the discussions in Theorem \ref{th3}.
	
	\begin{obs}{\label{obs_PnPm}}	
Suppose that $ P_n \Box P_m $ ($ n, m \ge 7 $)  is partitioned into blocks of suitable sizes and we identify vertices from each block to include in an $ F(P_n \Box P_m) $-set.  Then, choosing vertices from a $ 3 \times 3 $ block as in either figure \ref{fig_Void_P3P3}(a) (choosing $ v_{i, j} $ and  $v_{i+2, j+2}$) or figure \ref{fig_Void_P3P3}(b) (choosing $ v_{i, j}$ and $ v_{i+2, j+2} $) will create a void in that block, at a vertex of degree four. In such cases, these voids cannot be further dominated by any other adjacent block in $ P_n \Box P_m $, unlike the cases discussed earlier in Theorem \ref{thm_PnP3}.  Hence, if such voids occur independently in any (sub)block of size $ 3 \times 3 $, they will continue to be voids in $ P_n \Box P_m$. 
	\end{obs}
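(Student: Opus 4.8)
The plan is to isolate exactly which vertex of the $3\times 3$ block is forced to be undominated, to check that it is an interior (degree-four) vertex all of whose neighbours lie inside the block, and then to use the $2$-packing property to show the resulting void can never be repaired. Fix the $3\times 3$ block $B$ consisting of the vertices $v_{p,q}$ with $i\le p\le i+2$, $j\le q\le j+2$, and suppose the partition designates the two diagonal corners $v_{i,j}$ and $v_{i+2,j+2}$ (or, in figure~\ref{fig_Void_P3P3}(b), the corresponding anti-diagonal pair) as the vertices that $B$ contributes to the $F(P_n\Box P_m)$-set $S$. Let $c=v_{i+1,j+1}$ be the centre of $B$. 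Since $B$ is a genuine $3\times 3$ subgrid of $P_n\Box P_m$ (here $n,m\ge 7$), $c$ is an interior vertex, so $\deg c=4$ and $N(c)=\{v_{i,j+1},\,v_{i+1,j},\,v_{i+2,j+1},\,v_{i+1,j+2}\}\subseteq V(B)$; in particular no vertex outside $B$ is adjacent to $c$.

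Next I would use the distance formula $d(v_{i,j},v_{p,q})=|i-p|+|j-q|$ to see that each neighbour of $c$ is adjacent to one of the two chosen corners: $v_{i,j+1},v_{i+1,j}\in N(v_{i,j})$ and $v_{i+2,j+1},v_{i+1,j+2}\in N(v_{i+2,j+2})$. Hence if any one of these four vertices were placed in $S$, the corresponding corner of $B$ would have two neighbours in $S$, contradicting the $2$-packing condition; thus $N(c)\cap S=\emptyset$. The same computation gives $N[c]\cap N[v_{i,j}]\supseteq\{v_{i,j+1},v_{i+1,j}\}\ne\emptyset$, so $c$ itself cannot lie in $S$ either. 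Therefore $N[c]\cap S=\emptyset$, i.e.\ $c$ is a void.

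Finally I would argue that this void is unremovable. To dominate $c$ one would have to add some vertex of $N[c]$ to $S$, but every such vertex is excluded by the $2$-packing constraint already imposed by $v_{i,j}$ and $v_{i+2,j+2}$, and this remains true however the remainder of $S$ is chosen; hence no extension of $S$ to a $2$-packing dominates $c$. Moreover, since $N(c)\subseteq V(B)$, no choice made in a block adjacent to $B$ can reach $c$ — this is precisely the contrast with Theorem~\ref{thm_PnP3}, where the voids lie at corners of their blocks and can be picked up from a neighbouring block. Consequently, whenever a $3\times 3$ (sub)block is treated by a diagonal selection of the type in figure~\ref{fig_Void_P3P3}, its centre remains a void in $P_n\Box P_m$, which is the claim. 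I expect the only point needing genuine care to be the bookkeeping that confirms $c$ is an interior degree-four vertex and that \emph{all four} of its neighbours (not merely some) are blocked from $S$ by the two chosen corners; the rest is a short distance calculation.
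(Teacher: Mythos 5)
Your argument is correct and is exactly the content the paper conveys by inspection of figure \ref{fig_Void_P3P3}: the undominated vertex is the centre $v_{i+1,j+1}$ of the block, its four neighbours are each adjacent to one of the two chosen corners and the centre itself is at distance two from them, so $N[c]\cap S=\emptyset$ in any $2$-packing containing those corners, and since $N(c)$ lies entirely inside the block no adjacent block can repair the void. You have simply made the paper's figure-based observation rigorous via the distance formula, so no further comment is needed.
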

	\begin{figure}[!h]
		\centering
		\begin{subfigure}{0.45\linewidth}
			\centering
			\includegraphics[scale=0.5]{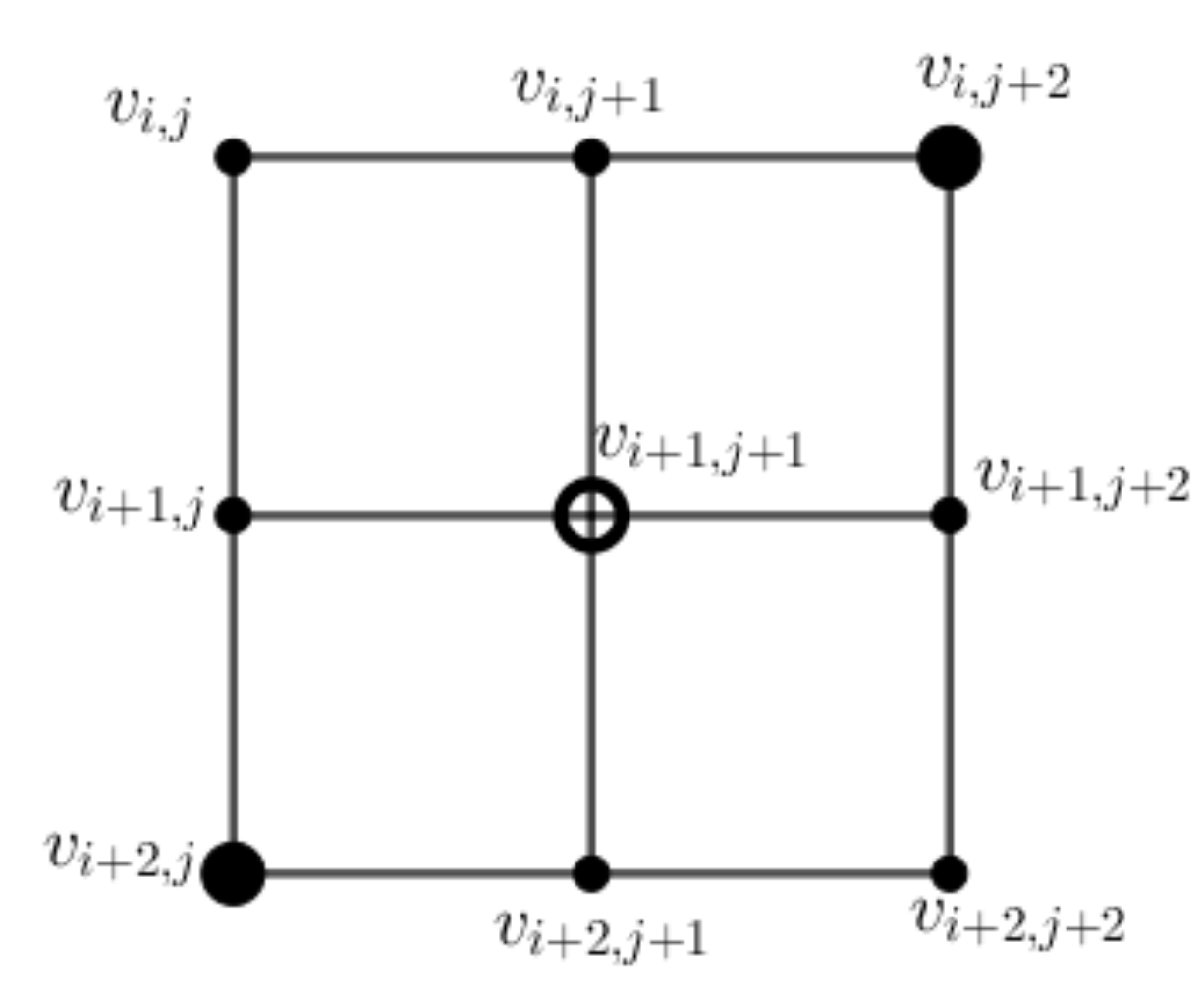}
			\caption{}
		\end{subfigure}
		\begin{subfigure}{0.45\linewidth}
			\centering
			\includegraphics[scale=0.5]{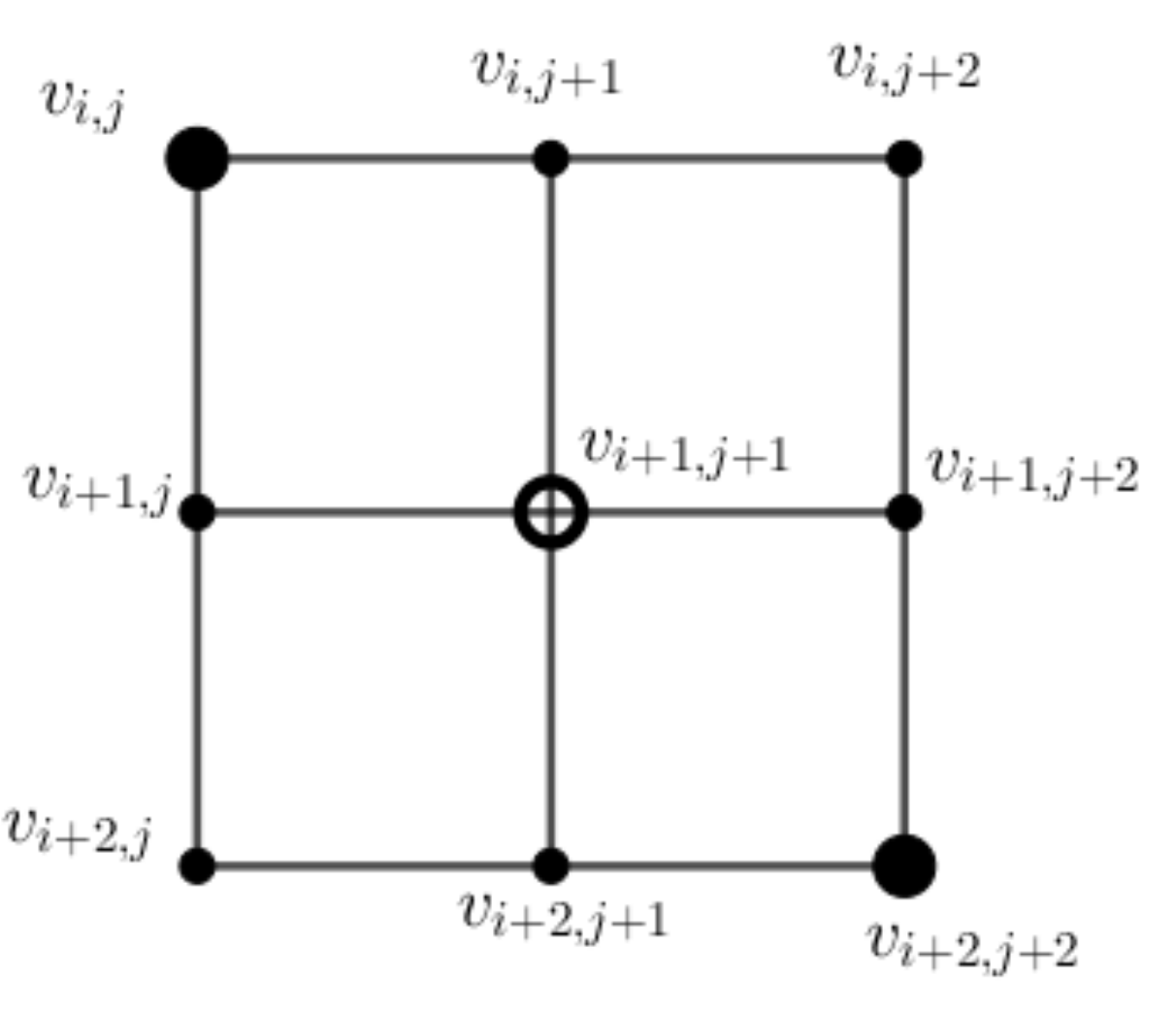}
			\caption{}
		\end{subfigure}
		\caption{$ 2 $-packings of $ P_3 \Box P_3 $ creating a void at a vertex of degree four}
		\label{fig_Void_P3P3}
	\end{figure}
	
	\begin{theorem}\label{th3}
		If $n \geq 7 $ and $ m \geq 7$, then $P_n \Box P_m$ is not efficiently dominatable.     
	\end{theorem}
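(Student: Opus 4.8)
The plan is to argue by a counting/partition argument analogous to Theorem~\ref{thm_PnP3}, but now exploiting the key phenomenon in Observation~\ref{obs_PnPm}: once $m\ge 7$ there is enough vertical room that a forced void lands at an interior vertex of degree four, and such a void can never be rescued by a neighbouring block. The first step is to recall that efficient dominatability would force $F(P_n\Box P_m)=nm$, i.e.\ the existence of an \emph{EDS} $S$ with $|N[v]\cap S|=1$ for every $v$; I would derive a contradiction by showing that any $2$-packing of $P_n\Box P_m$ necessarily leaves at least one void. So it suffices to show $P_n\Box P_m$ has no \emph{EDS} for $n,m\ge 7$.

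Next I would reduce to a local argument near a corner. Fix the top-left corner and look at the $3\times 3$ subgrid $\{v_{i,j}: 1\le i,j\le 3\}$. In an \emph{EDS}, the corner $v_{1,1}$ must be dominated by exactly one of $v_{1,1},v_{1,2},v_{2,1}$; a short case analysis (as in the proof of Theorem~\ref{th1}) shows that up to symmetry $v_{1,1}\in S$ or a vertex on the boundary close to it is in $S$, and in every case one is forced—by the requirement that $v_{2,2}$ be dominated exactly once and that the first row and first column each be perfectly covered—into a configuration that, when pushed one block inward, matches one of the two patterns in Figure~\ref{fig_Void_P3P3}. Concretely: tile the grid (or a large enough corner region) by $3\times 3$ blocks; in each block the unique way to place two packing vertices so that the block's own boundary voids can still be covered from outside forces a diagonal placement $v_{i,j}, v_{i+2,j+2}$, which by Observation~\ref{obs_PnPm} leaves a void at the center vertex $v_{i+1,j+1}$, a degree-four vertex once $n,m\ge 7$. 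That void is then permanent, contradicting $|N[v_{i+1,j+1}]\cap S|=1$.

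The cleanest route, which I expect to actually use, is: assume for contradiction $S$ is an \emph{EDS}; use Observation~\ref{obs_PnPm} to note that $S$ cannot contain a diagonal pair inside any interior $3\times 3$ block, then combine this with the ``perfect cover of each row/column'' constraint inherited from $P_n\Box P_2$-type arguments to show the placement of $S$ along the top boundary is essentially unique (the periodic anti-diagonal pattern), and finally observe that this forced pattern, propagated two or three rows down, is exactly a diagonal pair inside an interior $3\times 3$ block—contradiction. One may also want to dispatch small leftover cases ($n$ or $m$ in a fixed residue class mod $3$, or the strips near the right/bottom boundary) by direct inspection, citing Proposition~\ref{prop_P4To6} for the overlap with square grids of side $4,5,6$ when convenient.

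The main obstacle will be making the ``forced pattern'' step rigorous: showing that the \emph{EDS} condition on the first two rows (and the no-diagonal-pair condition from Observation~\ref{obs_PnPm}) genuinely pins down $S$ near the boundary up to the finitely many exceptional offsets, rather than merely suggesting it pictorially. I would handle this by an induction along the columns: having fixed which vertex of column $C_j$ (if any) lies in $S$, the constraints ``$v_{1,j+1}$ is dominated exactly once'' and ``no void at a degree-four vertex of the adjacent $3\times3$ block'' leave at most one admissible choice for column $C_{j+1}$, so the whole configuration is determined by its initial segment, and every initial segment eventually produces a forbidden interior diagonal pair.
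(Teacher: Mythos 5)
Your high-level strategy coincides with the paper's: assume an \emph{EDS} $S$ exists, start at the corner $v_{1,1}$, and use Observation~\ref{obs_PnPm} to show that the forced choices create a void at a degree-four vertex that no adjacent block can rescue. However, the concrete mechanisms you propose for making this rigorous contain steps that would fail. First, your tiling claim --- that ``the unique way to place two packing vertices'' in a $3\times 3$ block so that its boundary voids remain rescuable ``forces a diagonal placement $v_{i,j}, v_{i+2,j+2}$'' --- is false: the paper's own construction in Theorem~\ref{thm_PnP3} places the two vertices at positions $v_{1,1}^{(i)}$ and $v_{3,2}^{(i)}$ of each block, which is not a diagonal pair, and that placement does leave the block's remaining void rescuable by the next block. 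Second, the ``perfect cover of each row/column'' constraint does not transfer from the $P_n\Box P_2$ argument: in a grid with $m\ge 7$ rows, a vertex in row $i$ is routinely dominated from rows $i\pm 1$, so no individual row is perfectly covered from within. Third, your column-by-column induction is set up on the premise that each column contains at most one vertex of $S$ (``having fixed which vertex of column $C_j$ (if any) lies in $S$''), which is false for $m\ge 7$: a column of length $m$ can and generally must contain several $S$-vertices at mutual distance at least $3$, so the state space of your induction is not what you describe and the ``at most one admissible choice'' claim is unsupported.

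What the paper actually does --- and what your proposal gestures at but never carries out --- is a short, explicit, finite forcing chain confined to the $7\times 7$ corner. In the case $v_{1,1}\in S$, dominating $v_{2,2}$ forces $v_{2,3}$ or $v_{3,2}$ into $S$, and either choice forces one more vertex that completes a pattern of Figure~\ref{fig_Void_P3P3}, killing a degree-four vertex. In the case $v_{2,1}\in S$, the chain $v_{2,1}\to v_{4,2}\to v_{6,1}\to v_{7,3}\to v_{5,4}$ is forced step by step, after which $v_{3,3}$ has no admissible dominator; the case $v_{1,2}\in S$ follows by the transpose automorphism. That explicit case analysis is the entire proof, and it needs $n,m\ge 7$ only so that the $7\times 7$ corner is an induced subgrid in which the relevant vertices have degree four. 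To repair your write-up, you should replace the tiling/induction machinery with this bounded case analysis (or an equivalent exhaustive check of the corner), since the global ``forced periodic pattern'' you invoke does not exist in the form you state.
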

	\begin{proof}
	Let $ S $ be an $F(P_n \Box P_m)$-set. If possible, assume that $I(S)=nm$. Then, $ |N[v_{i, j}] \cap S| = 1 $, for each $ v_{i, j} \in V(P_n \Box P_m)$.  Choose an arbitrary vertex, say $ v_{1, 1} $. Then, either $ v_{1,1} \in S $ or exactly one of its neighbors, namely, $ v_{1, 2} $ or $ v_{2, 1} $ must be in $ S $. The above three cases are discussed in detail as below: \\
	\noindent \textbf{Case (i):} $v_{1,1} \in S$\\
	If $ v_{1,1} \in S $, then to dominate $v_{2,2}$, either $v_{2,3}$ or $v_{3,2} $ must be in $ S $. Suppose $v_{2,3}\in S$, then to dominate $v_{3,1}$, $v_{4,1} \in S$.  But, choosing $ v_{2,3} $ and $ v_{4,1} $ to include in $ S $ results in a $ 3 \times 3 $ block in which the vertices are dominated as in figure \ref{fig_Void_P3P3} (a) (with $ i = 2,\,\,j=1 $).
	Then, it follows from Observation \ref{obs_PnPm} that $ N[v_{3,2}] \cap S = \emptyset $, which is a contradiction. \\
	On the other hand, if $v_{3,2} \in S$, then by a similar argument as above, to dominate $v_{1,3}$, $v_{1,4} \in S$. This choice of vertices results in a $ 3 \times 3 $ block with domination as in figure \ref{fig_Void_P3P3} (a) (with $ i = 1,\,\,j = 2 $).  Hence, $N[v_{2,3}] \cap S = \emptyset$, which is a contradiction.
	\begin{figure}[!h]
		\centering
		\includegraphics[]{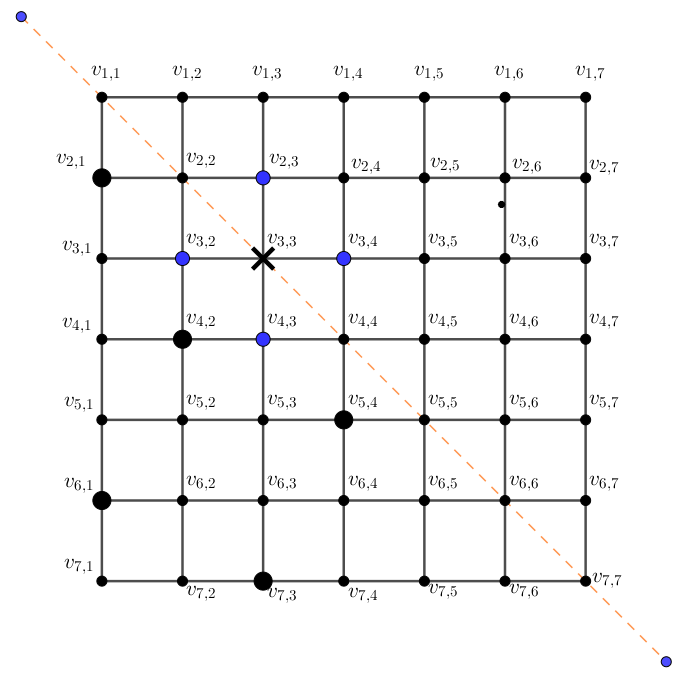}
		\caption{Efficient domination in $P_7 \Box P_7$}
		\label{fig17}
	\end{figure}\\
	\textbf{Case (ii):} $v_{2,1} \in S$\\
	To dominate $v_{4,1}$, either $v_{5,1}$ or $v_{4,2} \in S$. If $v_{5,1}\in S$, then to dominate $v_{3,2}$, $v_{3,3}$ must be in $S$. This results in a $ 3 \times 3 $ block with domination as in figure \ref{fig_Void_P3P3} (a) (with $ i = 3,\,\,j = 1 $).  Consequently, $N[v_{4,2}] \cap S = \emptyset$, leading to a contradiction. So, let $v_{4,2} \in S$. Then, to dominate $v_{5,1}$, $v_{6,1} \in S$. Next, to dominate $v_{6,3}$, either $v_{6,4}$ or $v_{7,3} \in S$. \\
	If  $v_{6,4} \in S$, then it results in a $ 3 \times 3 $ block with domination as in figure \ref{fig_Void_P3P3} (b) (with $ i = 4,\,\,j = 2 $).  Hence, $N[v_{5,3}] \cap S = \emptyset$, leading to a contradiction. \\ So, let $v_{7,3}\in S$. Then, to dominate $v_{5,3}$ we are left with only one choice, namely, $v_{5,4}$ to include in $ S $. At this stage, $S = \{v_{2,1},v_{4,2},v_{6,1},v_{7,3},v_{5,4}\}$. But, to dominate $v_{3,3}$, we are left with no choice as all its neighbors are at distance $1$ or $2$ from $S$ (refer to figure \ref{fig17}). Hence, $N[v_{3,3}] \cap S = \emptyset$, which is again a contradiction. \\
	\textbf{Case (iii):} $v_{1,2} \in S$\\
	Since there is an automorphism $f(v_{i,j})= v_{j,i}$ of $P_n \Box P_m$ that maps $v_{2,1}$ to $v_{1,2}$, the argument made in case(ii) can be modified accordingly. This results in $S = \{v_{1,2},v_{2,4},v_{1,6},v_{3,7},v_{4,5}\}$ in $ S $. But, as discussed in case (ii), to dominate $v_{3,3}$, we are left with no choice as all its neighbors are at distance $1$ or $2$ from $S$. Hence, $N[v_{3,3}] \cap S = \emptyset$, which is a contradiction. \\
	Summarizing the above arguments, it can be observed that each of these cases resulted in a void in $P_n \Box P_m$, whcih cannot be further dominated by any adjacent block.  
	In particular, such a void is created in a $ 7 \times 7 $ block of $ P_n \Box P_m$ (i.e., considering the first seven rows and the first seven columns).  As $ P_7 \Box P_7 $ is an induced subgraph of $ P_n \Box P_m $, for all $ n \ge 7 $ and $ m \ge 7 $, $ P_n \Box P_m $ is not efficiently dominatable when $ n, m \ge 7 $. 
\end{proof}	

The efficient domination in the grids $ P_n \Box P_4 $ for $ n > 4 $, $ P_n \Box P_5 $ for $ n > 5 $ and $ P_n \Box P_6 $ for $ n > 6 $ can be studied by similar arguments as in Theorem \ref{th3} and we arrive at the following result.
\begin{theorem}\label{thm_PnP4ToPnP6}
		For $ 4 \le m \le 6 $ and $ n > m $, $P_n \Box P_m$ is not efficiently dominatable.  
	   
\end{theorem}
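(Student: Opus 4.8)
The plan is to imitate, width by width, the forcing argument used for Theorem~\ref{th3}. Fix $m\in\{4,5,6\}$ and $n>m$, and suppose for contradiction that $P_n\Box P_m$ has an \textit{EDS} $S$, so $I(S)=nm$; this gives the two local rules (i) $d(u,v)\ge 3$ for all distinct $u,v\in S$, and (ii) every vertex of $P_n\Box P_m$ has exactly one dominator in $S$. I would start at the corner $v_{1,1}$: its unique dominator lies in $\{v_{1,1},v_{1,2},v_{2,1}\}$, and I would split into these three subcases. A point to flag at the outset is that, unlike in Theorem~\ref{th3}, the transpose $v_{i,j}\mapsto v_{j,i}$ is an automorphism only when $n=m$, so the subcases $v_{2,1}\in S$ (stepping into the short, $m$-row direction) and $v_{1,2}\in S$ (stepping into the long, $n$-column direction) are genuinely different and must each be handled.

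The engine inside each subcase is exactly the one in Theorem~\ref{th3}. Given the portion of $S$ already pinned down, one locates a not-yet-dominated vertex adjacent to it and observes that all but one of its candidate dominators are excluded — either because a candidate lies at distance at most $2$ from an element already in $S$ (violating rule (i)), or because a candidate is already dominated by another element of $S$ (again violating rules (i)--(ii)). This forces a new element into $S$, and iterating traces a near-diagonal staircase of $S$-vertices through the strip. The staircase must end in one of two ways: it produces a $3\times 3$ subblock with $S$-vertices at two opposite corners, in which case the centre of that subblock is a void at a vertex of degree four which, by Observation~\ref{obs_PnPm} (the configurations of Figure~\ref{fig_Void_P3P3}), cannot be rescued by any adjacent block — a contradiction; or the staircase is crowded into the top/bottom row or the rightmost columns of the strip and strands some vertex all of whose potential dominators have already been excluded — again a contradiction. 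Since $m\le 6$, the branching in each subcase is short and finite.

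For $m=6$ and $n\ge 7$ this is essentially the proof of Theorem~\ref{th3} verbatim: the only modification is that the step placing $v_{7,3}$ in $S$ is impossible (there is no row $7$), which forces the ``$v_{6,4}\in S$'' branch, and that branch already terminates in the contradiction $N[v_{5,3}]\cap S=\emptyset$ recorded there, while the ``$v_{1,2}\in S$'' subcase only ever uses columns up to $7$ and so carries over unchanged. For $m=5$ and $m=4$ the same staircase analysis is run inside a five- or four-row strip, with the extra subcases caused by the absence of the transpose automorphism worked out by hand. Here the hypothesis $n>m$ is genuinely used, most transparently when $m=4$: since $P_4\Box P_4$ \emph{is} efficiently dominatable by Proposition~\ref{prop_P4To6}(i), no contradiction can be produced from the first four columns alone, and the argument must drive the staircase into column $n\ge 5$ before a degree-four void or an undominatable vertex appears.

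The content of the proof is therefore not one hard idea but the careful, width-by-width enumeration of the forced branches, checking in each that the staircase either reaches one of the ``bad'' $3\times 3$ configurations of Figure~\ref{fig_Void_P3P3} or is pressed against a boundary with no legal continuation. The step I expect to be most delicate is the $m=4$, $n\ge 5$ case: the contradiction has to be extracted from the columns beyond the efficiently dominatable $P_4\Box P_4$, and with only four rows the forcing has little room, so several near-diagonal patterns must be followed all the way to the point where they collapse; verifying that the excerpt's claim ``by similar arguments as in Theorem~\ref{th3}'' really does cover every such pattern is the crux.
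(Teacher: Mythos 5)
Your proposal follows exactly the route the paper takes: the paper's entire justification for this theorem is the one-line remark that these grids ``can be studied by similar arguments as in Theorem~\ref{th3}'', and your plan --- corner forcing at $v_{1,1}$, the staircase of forced $S$-vertices, termination either in a degree-four void as in Observation~\ref{obs_PnPm} or in a stranded boundary vertex, with the $m=4$ case necessarily pushed past the efficiently dominatable $P_4\Box P_4$ into column $n\ge 5$ --- is a correct and in fact more explicit elaboration of that same argument. The specific modifications you flag (the missing row $7$ forcing the $v_{6,4}$ branch when $m=6$, the transpose subcase carrying over since it uses only four rows, and the loss of the transpose symmetry when $n\ne m$) all check out.
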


The results discussed above in Theorem \ref{th1} to Theorem \ref{thm_PnP4ToPnP6} lead to the following characterization for efficiently dominatable grid graphs.
\begin{coro} \label{cor1}
	If $n \geq 3$ and $ m \geq 3$, then $P_n \Box P_m$ is efficiently dominatable if and only if $n=m=4$.
\end{coro}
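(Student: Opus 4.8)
The plan is to obtain the corollary by consolidating the results proved above, after first exploiting the symmetry of the cartesian product to cut down the case analysis. Since the map $v_{i,j}\mapsto v_{j,i}$ is an isomorphism between $P_n\Box P_m$ and $P_m\Box P_n$, I would assume without loss of generality that $3\le m\le n$; it then suffices to show that among all such pairs $(n,m)$, the grid $P_n\Box P_m$ is efficiently dominatable exactly when $m=n=4$.

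The ``if'' direction is immediate from Proposition \ref{prop_P4To6}(i), which states that $P_4\Box P_4$ is efficiently dominatable. For the ``only if'' direction I would split on the value of $m$. If $m=3$, then Theorem \ref{thm_PnP3} shows that $P_n\Box P_3$ is not efficiently dominatable for any $n\ge 3$. If $m\ge 7$, then $n\ge m\ge 7$, so Theorem \ref{th3} gives that $P_n\Box P_m$ is not efficiently dominatable. The remaining possibilities are $m\in\{4,5,6\}$, which I would further divide into $n=m$ and $n>m$: for $n>m$ with $4\le m\le 6$, Theorem \ref{thm_PnP4ToPnP6} says $P_n\Box P_m$ is not efficiently dominatable; for $n=m$ the cases $m=5$ and $m=6$ are ruled out by Proposition \ref{prop_P4To6}(ii) and \ref{prop_P4To6}(iii) respectively, while $n=m=4$ is precisely the exceptional efficiently dominatable grid. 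Putting these together, the only pair with $n\ge m\ge 3$ giving an efficiently dominatable grid is $(4,4)$, and removing the normalization $m\le n$ yields the stated characterization.

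Because the corollary is essentially a bookkeeping step recording the combined content of Theorem \ref{thm_PnP3}, Proposition \ref{prop_P4To6}, Theorem \ref{th3}, and Theorem \ref{thm_PnP4ToPnP6}, there is no genuine difficulty to overcome; the only point demanding a little attention is verifying that the case split on $m$ is exhaustive and that the symmetry reduction is invoked up front, so that, for instance, a grid such as $P_{100}\Box P_3$ is caught by the $m=3$ case and not overlooked.
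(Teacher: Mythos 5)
Your proposal is correct and matches the paper's own treatment: the corollary is stated there as an immediate consequence of Theorem \ref{thm_PnP3}, Proposition \ref{prop_P4To6}, Theorem \ref{th3}, and Theorem \ref{thm_PnP4ToPnP6}, with no further argument given. Your explicit symmetry reduction to $3\le m\le n$ and the verification that the case split on $m$ is exhaustive simply make precise the bookkeeping the paper leaves implicit.
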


Next, using Construction \ref{Const_PnPn} discussed below, we derive a lower bound on $F(P_n \Box P_n) $ for $n\geq 7$. The bound is obtained by constructing a $2$-packing which dominates all vertices of $ P_n \Box P_n $, except a few vertices at the boundaries. Interestingly, it is evident from Table \ref{Table_Voids_PnPn} that the $ 2 $-packing obtained in the construction is nearly optimal (that is, most likely an $ F(P_n \Box P_n)$-set), as equality in the derived lower bound is attained for most values of $ n $. In addition, the construction helps in generalizing the efficient domination property in the infinite cases disucssed in Section $ \ref{InfLat} $. An illustration of the construction is shown in figure \ref{fig15} for $ P_9 \Box P_9 $ and it is easy to extend the construction for any $ n \ge 7 $.
\begin{figure}[!h]
	\definecolor{qqqqcc}{rgb}{0.,0.,0.8}
	\begin{tikzpicture}[scale=1.25,line cap=round,line join=round,>=triangle 45,x=1.0cm,y=1.0cm]
	\clip(3.71796,-7.65852806198719) rectangle (23.114644886003077,-1.6281968275611618);
	\draw [line width=1.6pt] (7.,-3.)-- (7.,-7.);
	\draw [line width=1.6pt] (7.,-3.)-- (11.,-3.);
	\draw [line width=1.6pt] (11.,-3.)-- (11.,-7.);
	\draw [line width=1.6pt] (7.,-7.)-- (11.,-7.);
	\draw [line width=1.6pt] (7.,-3.5)-- (11.,-3.5);
	\draw [line width=1.6pt] (7.,-4.)-- (11.,-4.);
	\draw [line width=1.6pt] (7.,-4.5)-- (11.,-4.5);
	\draw [line width=1.6pt] (7.,-5.)-- (11.,-5.);
	\draw [line width=1.6pt] (7.,-5.5)-- (11.,-5.5);
	\draw [line width=1.6pt] (7.,-6.)-- (11.,-6.);
	\draw [line width=1.6pt] (7.,-6.5)-- (11.,-6.5);
	\draw [line width=1.6pt] (7.5,-3.)-- (7.5,-7.);
	\draw [line width=1.6pt] (8.,-3.)-- (8.,-7.);
	\draw [line width=1.6pt] (8.5,-3.)-- (8.5,-7.);
	\draw [line width=1.6pt] (9.,-3.)-- (9.,-7.);
	\draw [line width=1.6pt] (9.5,-3.)-- (9.5,-7.);
	\draw [line width=1.6pt] (10.,-3.)-- (10.,-7.);
	\draw [line width=1.6pt] (10.5,-3.)-- (10.5,-7.);
	\draw (6.527876398564419,-2.408787956059439) node[anchor=north west] {$v_{1,1}$};
	\draw (10.713796841984866,-2.4575749015905815) node[anchor=north west,xshift=10pt,yshift=-10pt] {$v_{1,9}$};
	\draw (6.420545105143382,-6.926459112243219) node[anchor=north west,yshift=-2pt] {$v_{9,1}$};
	\draw (10.947974209448947,-6.994760835986818) node[anchor=north west] {$v_{9,9}$};
	\draw [line width=1.6pt,dash pattern=on 2pt off 2pt,color=qqqqcc] (7.,-3.5)-- (9.,-2.5);
	\draw [line width=1.6pt,dash pattern=on 2pt off 2pt,color=qqqqcc] (6.5,-5.)-- (11.5,-2.5);
	\draw [line width=1.6pt,dash pattern=on 2pt off 2pt,color=qqqqcc] (6.,-6.5)-- (12.,-3.5);
	\draw [line width=1.6pt,dash pattern=on 2pt off 2pt,color=qqqqcc] (6.5,-7.5)-- (11.5,-5.);
	\draw [line width=1.6pt,dash pattern=on 2pt off 2pt,color=qqqqcc] (9.,-7.5)-- (12.,-6.);
	\begin{scriptsize}
	\draw [fill=black] (7.,-3.) circle (2.5pt);
	\draw [fill=black] (7.5,-3.) circle (2.5pt);
	\draw [fill=black] (8.,-3.) circle (4.0pt);
	\draw [fill=black] (8.5,-3.) circle (2.5pt);
	\draw [color=black] (9.,-3.) circle (4.0pt);
	\draw [fill=black] (9.5,-3.) circle (2.5pt);
	\draw [fill=black] (10.,-3.) circle (2.5pt);
	\draw [fill=black] (10.5,-3.) circle (4.0pt);
	\draw [fill=black] (11.,-3.) circle (2.5pt);
	\draw [fill=black] (7.,-3.5) circle (4.0pt);
	\draw [fill=black] (7.5,-3.5) circle (2.5pt);
	\draw [fill=black] (8.,-3.5) circle (2.5pt);
	\draw [fill=black] (8.5,-3.5) circle (2.5pt);
	\draw [fill=black] (9.,-3.5) circle (2.5pt);
	\draw [fill=black] (9.5,-3.5) circle (4.0pt);
	\draw [fill=black] (10.,-3.5) circle (2.5pt);
	\draw [fill=black] (10.5,-3.5) circle (2.5pt);
	\draw [fill=black] (11.,-3.5) circle (2.5pt);
	\draw [fill=black] (7.,-4.) circle (2.5pt);
	\draw [fill=black] (7.5,-4.) circle (2.5pt);
	\draw [fill=black] (8.,-4.) circle (2.5pt);
	\draw [fill=black] (8.5,-4.) circle (4.0pt);
	\draw [fill=black] (9.,-4.) circle (2.5pt);
	\draw [fill=black] (9.5,-4.) circle (2.5pt);
	\draw [fill=black] (10.,-4.) circle (2.5pt);
	\draw [fill=black] (10.5,-4.) circle (2.5pt);
	\draw [fill=black] (11.,-4.) circle (4.0pt);
	\draw [fill=black] (7.,-4.5) circle (2.5pt);
	\draw [fill=black] (7.5,-4.5) circle (4.0pt);
	\draw [fill=black] (8.,-4.5) circle (2.5pt);
	\draw [fill=black] (8.5,-4.5) circle (2.5pt);
	\draw [fill=black] (9.,-4.5) circle (2.5pt);
	\draw [fill=black] (9.5,-4.5) circle (2.5pt);
	\draw [fill=black] (10.,-4.5) circle (4.0pt);
	\draw [fill=black] (10.5,-4.5) circle (2.5pt);
	\draw [fill=black] (11.,-4.5) circle (2.5pt);
	\draw [color=black] (7.,-5.) circle (4.0pt);
	\draw [fill=black] (7.5,-5.) circle (2.5pt);
	\draw [fill=black] (7.,-5.5) circle (2.5pt);
	\draw [fill=black] (7.,-6.) circle (4.0pt);
	\draw [fill=black] (7.,-6.5) circle (2.5pt);
	\draw [fill=black] (7.,-7.) circle (2.5pt);
	\draw [fill=black] (7.5,-5.5) circle (2.5pt);
	\draw [fill=black] (7.5,-6.) circle (2.5pt);
	\draw [fill=black] (7.5,-6.5) circle (2.5pt);
	\draw [fill=black] (7.5,-7.) circle (4.0pt);
	\draw [fill=black] (8.,-5.) circle (2.5pt);
	\draw [fill=black] (8.,-5.5) circle (4.0pt);
	\draw [fill=black] (8.,-6.) circle (2.5pt);
	\draw [fill=black] (8.,-6.5) circle (2.5pt);
	\draw [fill=black] (8.,-7.) circle (2.5pt);
	\draw [fill=black] (8.5,-5.) circle (2.5pt);
	\draw [fill=black] (9.,-5.) circle (4.0pt);
	\draw [fill=black] (9.5,-5.) circle (2.5pt);
	\draw [fill=black] (10.,-5.) circle (2.5pt);
	\draw [fill=black] (10.5,-5.) circle (2.5pt);
	\draw [color=black] (11.,-5.) circle (4.0pt);
	\draw [fill=black] (8.5,-5.5) circle (2.5pt);
	\draw [fill=black] (9.,-5.5) circle (2.5pt);
	\draw [fill=black] (9.5,-5.5) circle (2.5pt);
	\draw [fill=black] (10.,-5.5) circle (2.5pt);
	\draw [fill=black] (10.5,-5.5) circle (4.0pt);
	\draw [fill=black] (11.,-5.5) circle (2.5pt);
	\draw [fill=black] (8.5,-6.) circle (2.5pt);
	\draw [fill=black] (9.,-6.) circle (2.5pt);
	\draw [fill=black] (9.5,-6.) circle (4.0pt);
	\draw [fill=black] (10.,-6.) circle (2.5pt);
	\draw [fill=black] (10.5,-6.) circle (2.5pt);
	\draw [fill=black] (11.,-6.) circle (2.5pt);
	\draw [fill=black] (8.5,-6.5) circle (4.0pt);
	\draw [fill=black] (9.,-6.5) circle (2.5pt);
	\draw [fill=black] (9.5,-6.5) circle (2.5pt);
	\draw [fill=black] (10.,-6.5) circle (2.5pt);
	\draw [fill=black] (10.5,-6.5) circle (2.5pt);
	\draw [fill=black] (11.,-6.5) circle (4.0pt);
	\draw [fill=black] (11.,-7.) circle (2.5pt);
	\draw [fill=black] (10.5,-7.) circle (2.5pt);
	\draw [fill=black] (10.,-7.) circle (4.0pt);
	\draw [fill=black] (9.5,-7.) circle (2.5pt);
	\draw [color=black] (9.,-7.) circle (4.0pt);
	\draw [fill=black] (8.5,-7.) circle (2.5pt);
	\draw [fill=black] (9.,-2.5) circle (2.5pt);
	\draw [fill=black] (6.5,-5.) circle (2.5pt);
	\draw [fill=black] (11.5,-2.5) circle (2.5pt);
	\draw [fill=black] (6.,-6.5) circle (2.5pt);
	\draw [fill=black] (12.,-3.5) circle (2.5pt);
	\draw [fill=black] (6.5,-7.5) circle (2.5pt);
	\draw [fill=black] (11.5,-5.) circle (2.5pt);
	\draw [fill=black] (9.,-7.5) circle (2.5pt);
	\draw [fill=black] (12.,-6.) circle (2.5pt);
	\end{scriptsize}
	\end{tikzpicture}	
	\caption{Efficient domination in $P_9 \Box P_9$}
	\label{fig15}
\end{figure}
\begin{construction}{\label{Const_PnPn}}
	For $ n \ge 7$, we obtain a nearly optimal $ 2 $-packing for $P_n \Box P_n$ as follows:
	\begin{itemize}
		\item[(i)] Initially, select vertices from the first and second columns alternatively and at each selection, pick up a pair of vertices, namely, $ (v_{i, 1}, v_{i+2, 2}) $. Depending on the value of $n$, we start from either first or second row. Precisely, if $n=5k+4\,\,(k \in \mathbb{N})$, start with $ v_{2,1} $. Else, start with $ v_{1,1} $.
	\item[(ii)] Upon choosing each pair, skip two rows inbetween and proceed with the selection of next pair. 		
	For example, if $ n = 5k + 4, $ the first pair is with $(v_{2,1}, v_{4,2})$ and leaving two rows inbetwen, the second pair is $ (v_{7,1}, v_{9, 2}) $, third pair is $ (v_{12,1}, v_{14, 2}) $ and so on (refer to figure \ref{fig15}). Continue this selection until all rows are covered.  Note that the last choice may be either a pair of vertices or a single vertex in first column.
	\item[(iii)] Let $x$ be the last vertex (may be from first column or second column) chosen in the above process. Based on the choice of $ x $, we select a vertex $y$ from the last row as below:\\
	\begin{tabular}{ll}
		Case(i)& : \text{Suppose $x$ is $v_{n-2,2}$, then $y=v_{n,3}$.} \\ 
		Case(ii)& : \text{Suppose $x$ is $v_{n-1,2}$, then $y=v_{n,5}$.}\\
		Case(iii)& : \text{Suppose $x$ is $v_{n-1,1}$, then $y=v_{n,4}$.}\\
		Case(iv)& : \text{Suppose $x$ is $v_{n,1}$, then $y=x=v_{n,1}$.}\\
		Case(v)& : \text{Suppose $x$ is $v_{n,2}$, then $y=x=v_{n,2}$.} {\footnotesize (Occurs when $ n = 5k + 4 $)}
	\end{tabular}
	\item[(iv)] Upon fixing $ y $ as above, select the vertices on the last row which are at a distance of $\{5k|k=1,2,\dots,\floor{\frac{n}{5}}\}$ from $y$.
	\item[(v)] Next, for each $v_{i,j}$ selected in above steps, choose the vertices \linebreak $\{v_{i-k,j+2k}: k \in \mathbb{N}\}$ until the boundary is reached. The choice is analogous to a knight placed at $v_{i,j}$ moving towards east (two-step right, one step up) repeatedly.
\end{itemize}
It is evident from the above choice of vertices that the set of vertices generated at the end forms a $ 2 $-packing of $ P_n \Box P_n $, where $ n \ge 7 $ (refer to figure \ref{fig15}).
\end{construction}
\noindent \textbf{Number of voids generated by the above $ 2 $-packing:}\\

To compute the number of voids generated by the above $ 2 $-packing, the following observations are noted:
\begin{itemize}
	\item The vertices are included in $ S $ in such a way that all vertices of $ P_n \Box P_n $ are dominated exaclty once by $ S $, except a few that lie on the boundaries. Hence, voids occur only at the boundaries, that is, on the rows $ R_1 $, $ R_n $ and the columns $ C_1 $, $ C_n $.
	\item It follows from the construction that such voids occur either between a pair of vertices in $ S $ which are at distance five apart or at corners or at distance two from corner vertices.
	\item For instance, if $ v_{1,j},v_{1,q} \in S \cap R_1$, then $ d(v_{1,j}, v_{1,q}) = 5 $. Since $ v_{1,j}$ and $v_{1,q} $ dominate their respective neighbors in $ R_1 $, out of the four internal vertices lying on the path between $ v_{1,j}$ and $v_{1,q} $, possibly, there are at most two voids between $ v_{1,j} $ and $ v_{1, q} $.  But, in cases where one of their neighbors in the adjacent row, namely, $ R_2 $ is in $ S $, then the number of voids reduces to one. Similar arguments hold for $ R_n $, $ C_1 $ and $ C_n $.
	The number of such pairs of vertices at distance five on the boundaries and consequently, the number of voids depends on the value of $ n $, as discussed in detail below:
	
	\item \textbf{Case (i):} $ n \equiv 0\,\,(mod\,\,5) $ or $n=5k$\\
	In this case, $k$ vertices from each of $C_1$ and $R_n$ belong to $ S $, resulting in a total of $2k$ voids on $C_1$ and $R_n$. Similarly, $k$ vertices from each of $C_n$ and $R_1$ belong to $ S $, resulting in a total of $2k$ voids. Hence, if $ n = 5k $, then $ S $ generates $4k$ voids in $ P_n \Box P_n $, for $ n \ge 7 $.
	\item \textbf{Case (ii):} $ n \equiv 1\,\,(mod\,\,5) $ or $n=5k+1$\\
	In this case, $k+1$ vertices from each of $R_1$,$C_1$,$R_n$ and $C_n$ belong to $ S $, resulting in $ k $ voids on each. Hence, totally $4k$ voids are generated by $ S $, if $ n= 5k +1 $.
	\item \textbf{Case (iii):} $ n \equiv 2\,\,(mod\,\,5) $ or $n=5k+2$ \\
	In this case, $k+1$ vertices from each of $R_1$ and $C_1$ belong to $ S $, generating $k$ voids on each. And, $k$ vertices from $R_n$ and $C_n$ belong to $ S $, leading to a total of $2k+1$ voids. Hence, totally $4k+1=n-k-1$ voids are generated by $ S $, when $ n = 5k + 1 $.
	\item \textbf{Case (iv):} $ n \equiv 3\,\,(mod\,\,5) $ or $n=5k+3$ \\
	Here, $k+1$ vertices from each of $R_1$, $R_n$ and $C_1$ belong to $ S $, resulting in a total of $3k+1$ of voids.  Further, $k$ vertices from $C_n$ are in $ S $, leading to $k+1$ voids. Hence, a total of $4k+2 = n-k-1$ voids are generated by $ S $.
	\item \textbf{Case (v):} $ n \equiv 4\,\,(mod\,\,5) $ or $n=5k+4$\\
	In this case, $k+1$ vertices from each of $R_1$,$C_1$,$R_n$ and $C_n$ belong to $ S $. This results in $k$ voids on each and hence, a total of $4k$ voids are generated by $ S $.\\
\end{itemize}
For $ n = 9 $, the pattern of selection is shown in figure \ref{fig15} and it can be observed that the voids occur at the edges or boundaries. 
Following the above discussion, Table \ref{Table_Voids_PnPn} gives the number of voids $(n^2 - I(S))$ for $ 7 \le n \le 22 $, where $ S $ is the $ 2 $-packing obtained using Construction \ref{Const_PnPn}. In fact, for each $ 7 \le n \le 22 $, it is observed that the influence of $ S $ obtained above is maximum and is equal to $ F(P_n \Box P_n) $. \\
\vspace{-0.2cm}
	\begin{table}[H]
		\centering
		\caption{Number of voids in $ P_n \Box P_n $}
		\vspace{-0.2cm}
			\includegraphics{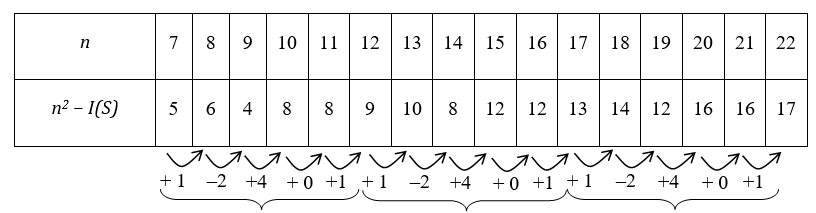}
	\label{Table_Voids_PnPn}
	\end{table}
%
%
	 Construction \ref{Const_PnPn} guarantees the existence of a $ 2 $-packing for $ P_n \Box P_n $ \linebreak $(n \ge 7) $ resulting in the number of voids as discussed above.  This leads to the following lower bound for $ P_n \Box P_n $, when $ n \ge 7 $.
	
	\begin{theorem} \label{th2}For $ n \ge 7 $ and $k=\floor{\frac{n}{5}}$, \hfill \\
		
		\qquad
		$F(P_{n}\Box P_n)\geq \begin{cases} n^2-4k; & \mbox{if $n\equiv$ 0 or 1 or 4 (mod 5)}\\
		{n^2 -n +k+1}; & \mbox{if $n\equiv$ 2 or 3 (mod 5)}
		\end{cases}$
	\end{theorem}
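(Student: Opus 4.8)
The plan is to take the $2$-packing $S$ of $P_n\Box P_n$ produced by Construction \ref{Const_PnPn} and use the identity $F(P_n\Box P_n)=\max\{I(S'):S'\text{ is a }2\text{-packing}\}\ge I(S)=n^2-z(n)$, where $z(n)$ denotes the number of voids of $S$. Since Construction \ref{Const_PnPn} already asserts that $S$ is a $2$-packing for every $n\ge 7$, the whole argument reduces to computing $z(n)$ in each residue class of $n$ modulo $5$. I would briefly recall why $S$ is a $2$-packing: the ``knight'' propagation in step (v) sends each seed $v_{i,j}$ to the diagonal line $v_{i,j},v_{i-1,j+2},v_{i-2,j+4},\dots$, and the seeds chosen in steps (i)--(iv) are spaced (two skipped rows in the first two columns, gaps of $5$ along the last row) precisely so that the closed neighbourhoods of all resulting vertices are pairwise disjoint; hence $d(u,v)\ge 3$ for all $u,v\in S$.

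The first substantive step is to show that the voids of $S$ lie only on the boundary $R_1\cup R_n\cup C_1\cup C_n$. This rests on the same perfect tiling that underlies the infinite grid result of Section \ref{InfLat}: translates of the pattern $\{(0,0),(1,-2),(2,-4),\dots\}\cup\{(0,0),(2,2),\dots\}$ used in the construction cover the plane so that every vertex has exactly one vertex of the pattern in its closed neighbourhood. I would make this precise by a local check on a generic interior vertex $v_{i,j}$ with $2\le i,j\le n-1$ lying sufficiently far from the sides, exactly as in the $P_\infty\Box P_\infty$ analysis, which I may invoke; it follows that no interior vertex is a void, so all voids sit on the four boundary lines.

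It then remains to count the boundary voids, and this is where the case split on $n\bmod 5$ enters. Using the three observations recorded just before the statement — that on each side the vertices of $S$ are consecutive at distance $5$; that a distance-$5$ gap on a side carries at most two undominated internal vertices, reducing to exactly one when a vertex of $S$ sits in the adjacent boundary line $R_2,R_{n-1},C_2,C_{n-1}$; and that additional voids may occur at or two steps from the corners — I would tabulate, for each residue of $n$ modulo $5$, how many vertices of $S$ land on each of $R_1,R_n,C_1,C_n$. These counts ($k$ or $k+1$ per side, with $k=\floor{n/5}$) are read directly off steps (iii)--(iv) of the construction. Summing the per-side contributions yields $z(n)=4k$ when $n\equiv 0,1,4\pmod 5$, and $z(n)=4k+1=n-k-1$ when $n\equiv 2\pmod 5$, respectively $z(n)=4k+2=n-k-1$ when $n\equiv 3\pmod 5$; substituting into $F(P_n\Box P_n)\ge n^2-z(n)$ gives the claimed bounds. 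The main obstacle is this final bookkeeping: one must track the corner vertices carefully and decide, side by side, whether each distance-$5$ gap contributes one void or two, since an off-by-one here would shift the stated bound.
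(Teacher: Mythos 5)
Your proposal follows essentially the same route as the paper: it takes the $2$-packing $S$ from Construction \ref{Const_PnPn}, argues that all voids of $S$ lie on the boundary rows and columns $R_1, R_n, C_1, C_n$, and then counts those voids case by case according to $n \bmod 5$ (obtaining $4k$ voids for $n \equiv 0, 1, 4 \pmod 5$ and $4k+1 = n-k-1$, respectively $4k+2 = n-k-1$, for $n \equiv 2, 3 \pmod 5$), exactly as in the discussion preceding the theorem. The bookkeeping you describe matches the paper's, so the argument is correct and not a genuinely different approach.
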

	As mentioned earlier, it is observed that the bound given in Thorem \ref{th2} is attained for most values of $n\geq 7$.  Based on this, we state the following conjecture.
	\begin{conj} \label{cj1} For $ n \ge 7 $ and $k=\floor{\frac{n}{5}}$, \hfill \\

		\qquad $F(P_{n}\Box P_n) = \begin{cases} n^2-4k; & \mbox{if $n\equiv$ 0 or 1 or 4 (mod 5)}\\
		{n^2 -n +k+1}; & \mbox{if $n\equiv$ 2 or 3 (mod 5)}
		\end{cases}$
	\end{conj}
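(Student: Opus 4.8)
For the square grid the statement is an equality, so the plan is to establish two matching inequalities: a lower bound $F(P_n\Box P_n)\ge\text{RHS}$ and an upper bound $F(P_n\Box P_n)\le\text{RHS}$. The lower bound is already in hand: the explicit $2$-packing produced in Construction \ref{Const_PnPn}, analysed in Theorem \ref{th2}, leaves exactly $4k$ voids when $n\equiv 0,1,4\pmod 5$ and exactly $n-k-1$ voids when $n\equiv 2,3\pmod 5$, so its influence equals the right-hand side and witnesses $F(P_n\Box P_n)\ge\text{RHS}$. The entire content of the conjecture is therefore the \emph{matching upper bound}, equivalently the claim that \emph{every} $2$-packing $S$ of $P_n\Box P_n$ leaves at least the stated number of voids; this is precisely the part that Theorem \ref{th2} does not address.

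The key tool I would introduce for the upper bound is the $5$-colouring $c(v_{i,j})=(i+2j)\bmod 5$. A direct check shows that for an interior vertex $v_{i,j}$ the closed neighbourhood $N[v_{i,j}]=\{v_{i,j},v_{i\pm1,j},v_{i,j\pm1}\}$ carries the five residues $c,c\pm1,c\pm2$, i.e.\ it meets each colour class exactly once; for an edge (resp.\ corner) vertex it meets each class at most once and misses one (resp.\ two) classes entirely. Hence each colour class $A_c=\{v_{i,j}:c(v_{i,j})=c\}$ is a perfect code of the infinite grid, and inside $P_n\Box P_n$ every full cross $N[s]$ with $s$ interior tiles exactly one cell of each colour. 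Reformulated, $I(S)=\sum_{s\in S}(1+\deg s)$ is the total area covered by the pairwise disjoint (possibly truncated) crosses $N[s]$, and the number of voids $n^2-I(S)$ is exactly the area these crosses fail to tile. Since the plus-pentomino admits essentially the unique lattice tiling of the plane given by the $A_c$-pattern, which is incompatible with a straight boundary, voids are forced; the whole problem is to count them.

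To make the deficiency precise I would work side by side on the four boundary lines $R_1,R_n,C_1,C_n$. A code vertex covers at most three consecutive cells of a boundary line when it lies on that line, and at most one when it lies on the adjacent line; meanwhile the $2$-packing condition forces any two code vertices serving the same line to be spaced so that, over each window of five consecutive boundary cells, at least one cell stays uncovered—mirroring the spacing-five structure of the $A_c$-pattern. Summing this ``one void per five boundary cells'' estimate over the four sides, with careful bookkeeping at the corners and of the residue $n\bmod 5$ (this is exactly where the split into $n\equiv 0,1,4$ versus $n\equiv 2,3$ originates, since the offset determines whether a fifth-cell void lands on a boundary line or is absorbed by an adjacent row), would yield the required lower bound on the number of voids and hence the upper bound on $F$.

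The hard part, and the reason this remains a conjecture rather than a theorem, is making that last heuristic rigorous: one must charge each forced void to a boundary segment of length five in a way that rules out \emph{every} reconfiguration, including non-periodic boundary patterns and subtle interactions between two sides meeting at a corner. A transfer-matrix / column-by-column dynamic program settles the problem for each \emph{fixed} width $m$ (producing an eventually periodic formula in the second dimension, and confirming the values recorded in Table \ref{Table_Voids_PnPn} for $7\le n\le 22$), but along the diagonal $m=n$ both dimensions grow at once, the state space is unbounded, and this method does not close the argument. A complete proof would therefore require either a self-contained discharging argument realising the charging scheme above, or a structural classification showing that every maximum-influence $2$-packing must coincide with the $A_c$-pattern except for the boundary corrections of Construction \ref{Const_PnPn}.
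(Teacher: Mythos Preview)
The statement is a \emph{conjecture} in the paper, not a theorem: the paper gives no proof and offers nothing beyond the lower bound of Theorem~\ref{th2} together with the empirical observation (Table~\ref{Table_Voids_PnPn}) that equality holds for $7\le n\le 22$. Your proposal correctly identifies this situation---the lower bound is already secured by Construction~\ref{Const_PnPn} and Theorem~\ref{th2}, and the matching upper bound is the entire open content.

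Your discussion of the $5$-colouring $c(v_{i,j})=(i+2j)\bmod 5$ and the associated charging/discharging heuristic goes well beyond anything the paper attempts; the paper contains no upper-bound argument at all, not even a sketch. You are also candid that your outline does not close: the ``one void per five boundary cells'' count is plausible but not proved, the corner interactions are not handled, and the transfer-matrix remark explains why fixed-width computations do not settle the diagonal case. So what you have written is an honest assessment of where the difficulty lies and a reasonable line of attack, but it is not a proof---and since the paper has none either, there is nothing further to compare.
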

	\subsection{Infinite lattice graphs} \label{InfLat}
	The construction of a $2$-packing discussed for a finite rectangular grid in the previous section resulted in voids at the boundaries. The vertices included in the $ 2 $-packing lie on the diagonal lines as shown in figure \ref{fig15}. This pattern can also be extended for an infinite rectangular grid and for an infinite triangular grid. The infinite hexagonal grid has another interesting pattern which is discussed in Section \ref{hex1}.
	
	\subsubsection{Infinite Rectangular grid} \label{irg}
	As mentioned earlier, a rectangular grid that is bounded on the three sides (top, left and right) and unbounded at the bottom is referred to as $P_n \Box P_\infty $, where $ 1 \le n < \infty $. The one which is bounded on the two sides (top and left) and unbounded at right and bottom is referred to as $P_\infty \Box P_\infty $. 
	
	It is noted that Table \ref{Table_Voids_PnPn} depicts a pattern in the difference between the number of voids for consecutive values of $ n $ as follows:  $(+1,-2,+4,0,+1)$, \linebreak $(+1,-2,4,0,+1)$ $ \dots$
	Hence, as $ n $ increases or as $ n \rightarrow \infty $, the number of voids keep oscillating and does not coverge to zero.  Consequently, 
	the grids $ P_n \Box P_{\infty} $ for $ 1 \le n \le \infty $ and $ P_{\infty} \Box P_{\infty} $ are not efficiently dominatable.\\
	
	In the next result, extending construction \ref{Const_PnPn}, we prove that an infinite rectangular grid (unbounded on all four sides) is efficiently dominatable.
	\begin{theorem}
		An infinite rectangular grid is 
		efficiently dominatable.
	\end{theorem}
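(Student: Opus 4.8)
The plan is to exhibit an explicit efficient dominating set, obtained by extending the diagonal (``knight move'') pattern of Construction~\ref{Const_PnPn} to the whole plane. Index the vertices of the infinite rectangular grid by $\mathbb{Z}^2$, writing $v_{i,j}$ for $(i,j)\in\mathbb{Z}^2$, with $v_{i,j}$ adjacent to $v_{p,q}$ if and only if $|i-p|+|j-q|=1$, so that $d(v_{i,j},v_{p,q})=|i-p|+|j-q|$. I would then set
\[
S=\bigl\{\,v_{i,j}:\ 2i+j\equiv 0 \pmod{5}\,\bigr\}.
\]
Geometrically each residue class $\{v_{i,j}:2i+j\equiv c\pmod 5\}$ is a disjoint union of the lines $\{v_{i-k,\,j+2k}:k\in\mathbb{Z}\}$ appearing in Construction~\ref{Const_PnPn}, since $2(i-k)+(j+2k)=2i+j$; thus $S$ is exactly the bi-infinite extension of that construction with the boundary effects removed, which is why the voids that appeared in the finite case disappear here.

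First I would verify that $S$ is a $2$-packing. If $v_{i,j},v_{p,q}\in S$ are distinct, write $a=i-p$ and $b=j-q$, so $2a+b\equiv 0\pmod 5$. If $|a|+|b|\le 2$, then $(a,b)$ is one of the finitely many pairs $(0,0),(\pm1,0),(0,\pm1),(\pm2,0),(0,\pm2),(\pm1,\pm1)$, and a direct check shows $2a+b\in\{0,\pm1,\pm2,\pm3,\pm4\}$ with the only multiple of $5$ occurring for $(a,b)=(0,0)$; since $v_{i,j}\neq v_{p,q}$ this forces $|a|+|b|\ge 3$, i.e.\ $d(v_{i,j},v_{p,q})\ge 3$ and $N[v_{i,j}]\cap N[v_{p,q}]=\emptyset$. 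Next I would verify that $S$ dominates every vertex exactly once: fixing $v_{a,b}$, the five vertices $v_{a,b},v_{a\pm1,b},v_{a,b\pm1}$ of $N[v_{a,b}]$ have $2i+j$-values $2a+b$ and $2a+b\pm2$, $2a+b\pm1$, which modulo $5$ are $2a+b+\{0,2,3,1,4\}$, a complete residue system; hence exactly one of them is $\equiv 0\pmod 5$, so $|N[v_{a,b}]\cap S|=1$. Therefore $S$ is an efficient dominating set and the infinite rectangular grid is efficiently dominatable.

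I do not expect any serious obstacle: both verifications are finite modular computations. The only point needing a little care is that, for an infinite graph, the cardinality characterization ``$F(G)=|V(G)|$'' is not the convenient formulation, so the argument should proceed directly from the definition of an \textit{EDS} (that every vertex has exactly one closed-neighbor in $S$), which is precisely what the two checks above establish; one may also remark that the earlier argument showing $P_n\Box P_\infty$ and $P_\infty\Box P_\infty$ are not efficiently dominatable relied essentially on the presence of boundary rows/columns, which are absent here, so there is no conflict.
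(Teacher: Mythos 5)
Your proof is correct, and it arrives at the same dominating set as the paper --- the vertices lying on the diagonal lines of Construction~\ref{Const_PnPn}, extended over all of $\mathbb{Z}^2$ --- but by a genuinely different and tighter argument. The paper builds $S$ iteratively: starting from one vertex it repeatedly adjoins the four vertices at distance three in the knight-move directions, and then argues, with reference to the figures, that pairing vertices on consecutive diagonals into disjoint $2\times 3$ subgrids leaves no voids between the diagonals. That argument is geometric and somewhat informal: it does not quite pin down why the iteration is consistent (i.e.\ why the four new vertices generated from different centres never violate the $2$-packing condition among themselves) nor why every vertex of the plane is eventually dominated. Your closed-form description $S=\{v_{i,j}:2i+j\equiv 0\pmod 5\}$ replaces both appeals with two finite modular computations: the list of residues $2a+b$ over $|a|+|b|\le 2$ shows $S$ is a $2$-packing, and the observation that the five values of $2i+j$ over a closed neighborhood form a complete residue system modulo $5$ shows every vertex is dominated exactly once. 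What your approach buys is rigour, brevity, and independence from figures; what the paper's buys is continuity with the finite case, since the same iterative pattern is what produces the boundary voids of $P_n\Box P_n$ that disappear in the unbounded grid. Your remark that the definition of an \textit{EDS} (rather than the identity $F(G)=|V(G)|$) is the right formulation for an infinite graph is a sensible precaution that the paper glosses over.
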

	
	\begin{proof}
		
		Let $G$ denote an infinite rectangular grid. Construct an EDS $S$ for $G$ as follows: Start with an arbitrary vertex, say, $v_{i, j}$ and let $S = \{v_{i, j}\}$. Next, add the four vertices $v_{i+1, j-2},v_{i+2, j+1}, v_{i-1, j+2}$, $v_{i-2, j-1}$ to $S$, if they are already not in $S$. Since $d(v_{i,j}, v_{p,q}) = |i-p|+|j-q|$, it can be observed that all these four vertices are at distance three from $v_{i, j}$ and they are also mutually at distance at least three. Hence, $ S = S \cup \{v_{i+1, j-2},v_{i+2, j+1}, v_{i-1, j+2}, v_{i-2, j-1}\}$ is a $ 2 $-packing of $ G $.  Next, for each vertex $ v_{p, q} $ in $ S $, select another set of four vertices in the same manner as above and add them to $S$. The vertices are chosen in such a way that distance between any two vertices in $ S $ is at least three and hence, the set $S$ so obtained forms a $2$-packing of $ G $.  Note that the vertices in $ S $ lie on the diagonal lines as shown in figure \ref{fig_P11P11}. Pairing the vertices of $ S $ lying on consecutive diagonal lines to form opposite corners of $2 \times 3$ grids as in figure \ref{fig2}, it can be observed these $2 \times 3$ grids are disjoint and there are no voids between the diagonal lines. 
		Since $ G $ is infinite (or unbounded on all sides), this pattern of adding vertices to $ S $ shall continue iteratively so that all vertices of $ G $ are dominated, resulting in no void.  Hence, the set $ S $ so obtained is an \textit{EDS} of $ G $ or equivalently, $ G $ is efficiently dominatable.	
	\end{proof}
	\definecolor{cqcqcq}{rgb}{0.7529411764705882,0.7529411764705882,0.7529411764705882}
	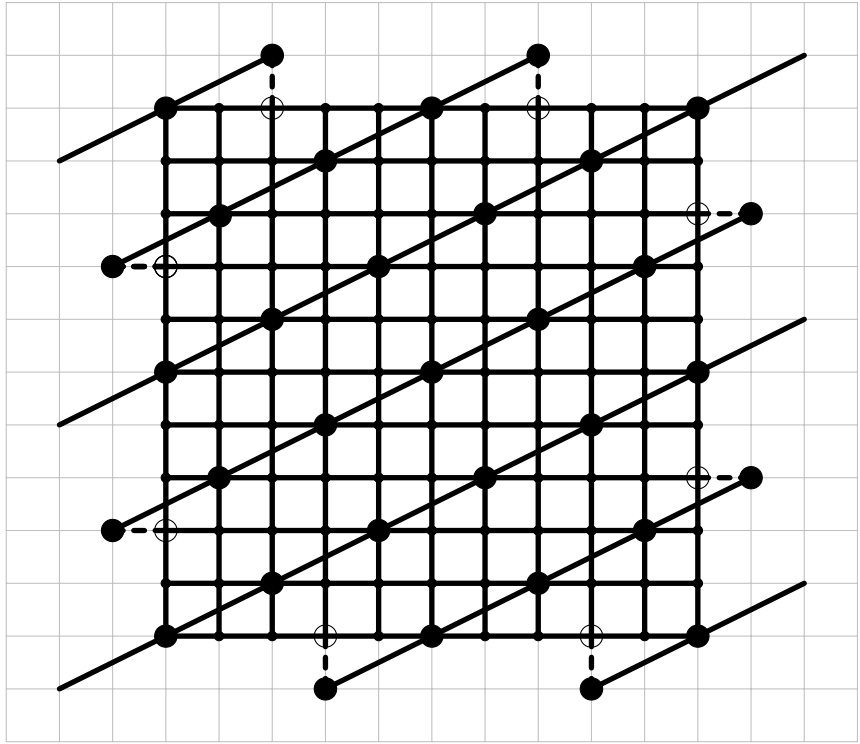
\begin{figure}[!h]		
		\begin{tikzpicture}[scale=0.7,line cap=round,line join=round,>=triangle 45,x=1.0cm,y=1.0cm]
		\draw [color=cqcqcq,, xstep=1.0cm,ystep=1.0cm] (-1.,-9.) grid (15.,5.);
		\clip(-3.,-9.) rectangle (15.,5.);
		\draw [line width=2.pt] (2.,3.)-- (12.,3.);
		\draw [line width=2.pt] (2.,2.)-- (12.,2.);
		\draw [line width=2.pt] (2.,1.)-- (12.,1.);
		\draw [line width=2.pt] (2.,0.)-- (12.,0.);
		\draw [line width=2.pt] (2.,-1.)-- (12.,-1.);
		\draw [line width=2.pt] (2.,-2.)-- (12.,-2.);
		\draw [line width=2.pt] (2.,-3.)-- (12.,-3.);
		\draw [line width=2.pt] (2.,-4.)-- (12.,-4.);
		\draw [line width=2.pt] (2.,-5.)-- (12.,-5.);
		\draw [line width=2.pt] (2.,-6.)-- (12.,-6.);
		\draw [line width=2.pt] (2.,-7.)-- (12.,-7.);
		\draw [line width=2.pt] (2.,3.)-- (2.,-7.);
		\draw [line width=2.pt] (3.,3.)-- (3.,-7.);
		\draw [line width=2.pt] (4.,3.)-- (4.,-7.);
		\draw [line width=2.pt] (5.,3.)-- (5.,-7.);
		\draw [line width=2.pt] (6.,3.)-- (6.,-7.);
		\draw [line width=2.pt] (7.,3.)-- (7.,-7.);
		\draw [line width=2.pt] (8.,3.)-- (8.,-7.);
		\draw [line width=2.pt] (9.,3.)-- (9.,-7.);
		\draw [line width=2.pt] (10.,3.)-- (10.,-7.);
		\draw [line width=2.pt] (11.,3.)-- (11.,-7.);
		\draw [line width=2.pt] (12.,3.)-- (12.,-7.);
		\draw [line width=2.pt] (1.,0.)-- (9.,4.);
		\draw [line width=2.pt] (0.,-3.)-- (14.,4.);
		\draw [line width=2.pt] (1.,-5.)-- (13.,1.);
		\draw [line width=2.pt] (0.,-8.)-- (14.,-1.);
		\draw [line width=2.pt] (5.,-8.)-- (13.,-4.);
		\draw [line width=2.pt] (0.,2.)-- (4.,4.);
		\draw [line width=2.pt] (10.,-8.)-- (14.,-6.);
		\draw [line width=2.pt,dash pattern=on 4pt off 4pt] (2.,0.)-- (1.,0.);
		\draw [line width=2.pt,dash pattern=on 4pt off 4pt] (4.,3.)-- (4.,4.);
		\draw [line width=2.pt,dash pattern=on 4pt off 4pt] (9.,3.)-- (9.,4.);
		\draw [line width=2.pt,dash pattern=on 4pt off 4pt] (12.,1.)-- (13.,1.);
		\draw [line width=2.pt,dash pattern=on 4pt off 4pt] (12.,-4.)-- (13.,-4.);
		\draw [line width=2.pt,dash pattern=on 4pt off 4pt] (10.,-7.)-- (10.,-8.);
		\draw [line width=2.pt,dash pattern=on 4pt off 4pt] (5.,-7.)-- (5.,-8.);
		\draw [line width=2.pt,dash pattern=on 4pt off 4pt] (2.,-5.)-- (1.,-5.);
		\draw [line width=2.pt,dash pattern=on 4pt off 4pt] (2.,0.)-- (1.,0.);
		\draw [line width=2.pt,dash pattern=on 4pt off 4pt] (2.,0.)-- (1.,0.);
		\begin{scriptsize}
		\draw [fill=black] (2.,3.) circle (6pt);
		\draw [fill=black] (3.,3.) circle (2.5pt);
		\draw [color=black] (4.,3.) circle (6.0pt);
		\draw [fill=black] (5.,3.) circle (2.5pt);
		\draw [fill=black] (6.,3.) circle (2.5pt);
		\draw [fill=black] (7.,3.) circle (6.0pt);
		\draw [fill=black] (8.,3.) circle (2.5pt);
		\draw [color=black] (9.,3.) circle (6.0pt);
		\draw [fill=black] (10.,3.) circle (2.5pt);
		\draw [fill=black] (11.,3.) circle (2.5pt);
		\draw [fill=black] (12.,3.) circle (6pt);
		\draw [fill=black] (2.,2.) circle (2.5pt);
		\draw [fill=black] (2.,1.) circle (2.5pt);
		\draw [color=black] (2.,0.) circle (6.0pt);
		\draw [fill=black] (2.,-1.) circle (2.5pt);
		\draw [fill=black] (2.,-2.) circle (6pt);
		\draw [fill=black] (2.,-3.) circle (2.5pt);
		\draw [fill=black] (2.,-4.) circle (2.5pt);
		\draw [color=black] (2.,-5.) circle (6.0pt);
		\draw [fill=black] (2.,-6.) circle (2.5pt);
		\draw [fill=black] (3.,2.) circle (2.5pt);
		\draw [fill=black] (3.02,0.96) circle (6pt);
		\draw [fill=black] (3.,0.) circle (2.5pt);
		\draw [fill=black] (3.,-1.) circle (2.5pt);
		\draw [fill=black] (3.,-2.) circle (2.5pt);
		\draw [fill=black] (3.,-3.) circle (2.5pt);
		\draw [fill=black] (3.,-4.) circle (6 pt);
		\draw [fill=black] (3.,-5.) circle (2.5pt);
		\draw [fill=black] (3.,-6.) circle (2.5pt);
		\draw [fill=black] (4.,2.) circle (2.5pt);
		\draw [fill=black] (5.,2.) circle (6pt);
		\draw [fill=black] (6.,2.) circle (2.5pt);
		\draw [fill=black] (7.,2.) circle (2.5pt);
		\draw [fill=black] (8.,2.) circle (2.5pt);
		\draw [fill=black] (9.,2.) circle (2.5pt);
		\draw [fill=black] (10.,2.) circle (6pt);
		\draw [fill=black] (11.,2.) circle (2.5pt);
		\draw [fill=black] (12.,2.) circle (2.5pt);
		\draw [fill=black] (4.,1.) circle (2.5pt);
		\draw [fill=black] (5.,1.) circle (2.5pt);
		\draw [fill=black] (6.,1.) circle (2.5pt);
		\draw [fill=black] (7.,1.) circle (2.5pt);
		\draw [fill=black] (8.,1.) circle (6pt);
		\draw [fill=black] (9.,1.) circle (2.5pt);
		\draw [fill=black] (10.,1.) circle (2.5pt);
		\draw [fill=black] (11.,1.) circle (2.5pt);
		\draw [color=black] (12.,1.) circle (6.0pt);
		\draw [fill=black] (4.,0.) circle (2.5pt);
		\draw [fill=black] (5.,0.) circle (2.5pt);
		\draw [fill=black] (6.,0.) circle (6pt);
		\draw [fill=black] (7.,0.) circle (2.5pt);
		\draw [fill=black] (8.,0.) circle (2.5pt);
		\draw [fill=black] (9.,0.) circle (2.5pt);
		\draw [fill=black] (10.,0.) circle (2.5pt);
		\draw [fill=black] (11.,0.) circle (6pt);
		\draw [fill=black] (12.,0.) circle (2.5pt);
		\draw [fill=black] (4.,-1.) circle (6pt);
		\draw [fill=black] (5.,-1.) circle (2.5pt);
		\draw [fill=black] (6.,-1.) circle (2.5pt);
		\draw [fill=black] (7.,-1.) circle (2.5pt);
		\draw [fill=black] (8.,-1.) circle (2.5pt);
		\draw [fill=black] (9.,-1.) circle (6pt);
		\draw [fill=black] (10.,-1.) circle (2.5pt);
		\draw [fill=black] (11.,-1.) circle (2.5pt);
		\draw [fill=black] (12.,-1.) circle (2.5pt);
		\draw [fill=black] (4.,-2.) circle (2.5pt);
		\draw [fill=black] (5.,-2.) circle (2.5pt);
		\draw [fill=black] (6.,-2.) circle (2.5pt);
		\draw [fill=black] (7.,-2.) circle (6pt);
		\draw [fill=black] (8.,-2.) circle (2.5pt);
		\draw [fill=black] (9.,-2.) circle (2.5pt);
		\draw [fill=black] (10.,-2.) circle (2.5pt);
		\draw [fill=black] (11.,-2.) circle (2.5pt);
		\draw [fill=black] (12.,-2.) circle (6pt);
		\draw [fill=black] (4.,-3.) circle (2.5pt);
		\draw [fill=black] (5.,-3.) circle (6pt);
		\draw [fill=black] (6.,-3.) circle (2.5pt);
		\draw [fill=black] (7.,-3.) circle (2.5pt);
		\draw [fill=black] (8.,-3.) circle (2.5pt);
		\draw [fill=black] (9.,-3.) circle (2.5pt);
		\draw [fill=black] (10.,-3.) circle (6pt);
		\draw [fill=black] (11.,-3.) circle (2.5pt);
		\draw [fill=black] (12.,-3.) circle (2.5pt);
		\draw [fill=black] (4.,-4.) circle (2.5pt);
		\draw [fill=black] (5.,-4.) circle (2.5pt);
		\draw [fill=black] (6.,-4.) circle (2.5pt);
		\draw [fill=black] (7.,-4.) circle (2.5pt);
		\draw [fill=black] (8.,-4.) circle (6pt);
		\draw [fill=black] (9.,-4.) circle (2.5pt);
		\draw [fill=black] (10.,-4.) circle (2.5pt);
		\draw [fill=black] (11.,-4.) circle (2.5pt);
		\draw [color=black] (12.,-4.) circle (6.0pt);
		\draw [fill=black] (4.,-5.) circle (2.5pt);
		\draw [fill=black] (5.,-5.) circle (2.5pt);
		\draw [fill=black] (6.,-5.) circle (6pt);
		\draw [fill=black] (7.,-5.) circle (2.5pt);
		\draw [fill=black] (8.,-5.) circle (2.5pt);
		\draw [fill=black] (9.,-5.) circle (2.5pt);
		\draw [fill=black] (10.,-5.) circle (2.5pt);
		\draw [fill=black] (11.,-5.) circle (6pt);
		\draw [fill=black] (12.,-5.) circle (2.5pt);
		\draw [fill=black] (4.,-6.) circle (6pt);
		\draw [fill=black] (5.,-6.) circle (2.5pt);
		\draw [fill=black] (6.,-6.) circle (2.5pt);
		\draw [fill=black] (7.,-6.) circle (2.5pt);
		\draw [fill=black] (8.,-6.) circle (2.5pt);
		\draw [fill=black] (9.,-6.) circle (6pt);
		\draw [fill=black] (10.,-6.) circle (2.5pt);
		\draw [fill=black] (11.,-6.) circle (2.5pt);
		\draw [fill=black] (12.,-6.) circle (2.5pt);
		\draw [fill=black] (2.,-7.) circle (6pt);
		\draw [fill=black] (3.,-7.) circle (2.5pt);
		\draw [fill=black] (4.,-7.) circle (2.5pt);
		\draw [color=black] (5.,-7.) circle (6.0pt);
		\draw [fill=black] (6.,-7.) circle (2.5pt);
		\draw [fill=black] (7.,-7.) circle (6pt);
		\draw [fill=black] (8.,-7.) circle (2.5pt);
		\draw [fill=black] (9.,-7.) circle (2.5pt);
		\draw [color=black] (10.,-7.) circle (6.0pt);
		\draw [fill=black] (11.,-7.) circle (2.5pt);
		\draw [fill=black] (12.,-7.) circle (6pt);
		\draw [fill=black] (1.,0.) circle (2.5pt);
		\draw [fill=black] (9.,4.) circle (6 pt);
		\draw [fill=black] (1.,-5.) circle (6 pt);
		\draw [fill=black] (13.,1.) circle (6 pt);
		\draw [fill=black] (5.,-8.) circle (6 pt);
		\draw [fill=black] (13.,-4.) circle (6 pt);
		\draw [fill=black] (4.,4.) circle (6 pt);
		\draw [fill=black] (10.,-8.) circle (6 pt);
		\draw [color=black] (2.,0.) circle (6.0pt);
		\draw [fill=black] (1.,0.) circle (6 pt);
		\draw [color=black] (2.,0.) circle (6.0pt);
		\draw [fill=black] (1.,0.) circle (2.5pt);
		\end{scriptsize}
		\end{tikzpicture}
		\caption{Efficient domination in an Infinite rectangular grid  }\label{fig_P11P11}\end{figure}
	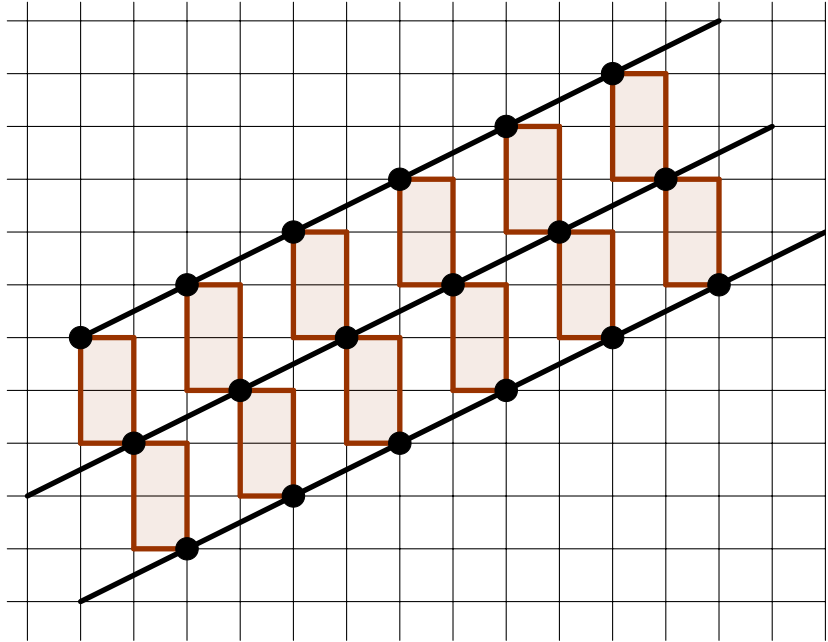
\begin{figure}[!h]
			\definecolor{zzttqq}{rgb}{0.6,0.2,0.}
			\begin{tikzpicture}[scale=0.7,line cap=round,line join=round,>=triangle 45,x=1.0cm,y=1.0cm]
			\draw [color=black,, xstep=1.0cm,ystep=1.0cm] (20.62123875396314,-11.734687997774342) grid (36.,0.3534496776321836);
			\clip(18.62123875396314,-11.734687997774342) rectangle (36.,0.3534496776321836);
			\fill[line width=2.pt,color=zzttqq,fill=zzttqq,fill opacity=0.10000000149011612] (24.,-5.) -- (24.,-7.) -- (25.,-7.) -- (25.,-5.) -- cycle;
			\fill[line width=2.pt,color=zzttqq,fill=zzttqq,fill opacity=0.10000000149011612] (26.,-4.) -- (26.,-6.) -- (27.,-6.) -- (27.,-4.) -- cycle;
			\fill[line width=2.pt,color=zzttqq,fill=zzttqq,fill opacity=0.10000000149011612] (23.,-8.) -- (23.,-10.) -- (24.,-10.) -- (24.,-8.) -- cycle;
			\fill[line width=2.pt,color=zzttqq,fill=zzttqq,fill opacity=0.10000000149011612] (22.,-6.) -- (22.,-8.) -- (23.,-8.) -- (23.,-6.) -- cycle;
			\fill[line width=2.pt,color=zzttqq,fill=zzttqq,fill opacity=0.10000000149011612] (28.,-3.) -- (28.,-5.) -- (29.,-5.) -- (29.,-3.) -- cycle;
			\fill[line width=2.pt,color=zzttqq,fill=zzttqq,fill opacity=0.10000000149011612] (30.,-2.) -- (30.,-4.) -- (31.,-4.) -- (31.,-2.) -- cycle;
			\fill[line width=2.pt,color=zzttqq,fill=zzttqq,fill opacity=0.10000000149011612] (32.,-1.) -- (32.,-3.) -- (33.,-3.) -- (33.,-1.) -- cycle;
			\fill[line width=2.pt,color=zzttqq,fill=zzttqq,fill opacity=0.10000000149011612] (25.,-7.) -- (25.,-9.) -- (26.,-9.) -- (26.,-7.) -- cycle;
			\fill[line width=2.pt,color=zzttqq,fill=zzttqq,fill opacity=0.10000000149011612] (27.,-6.) -- (27.,-8.) -- (28.,-8.) -- (28.,-6.) -- cycle;
			\fill[line width=2.pt,color=zzttqq,fill=zzttqq,fill opacity=0.10000000149011612] (29.,-5.) -- (29.,-7.) -- (30.,-7.) -- (30.,-5.) -- cycle;
			\fill[line width=2.pt,color=zzttqq,fill=zzttqq,fill opacity=0.10000000149011612] (31.,-4.) -- (31.,-6.) -- (32.,-6.) -- (32.,-4.) -- cycle;
			\fill[line width=2.pt,color=zzttqq,fill=zzttqq,fill opacity=0.10000000149011612] (33.,-3.) -- (33.,-5.) -- (34.,-5.) -- (34.,-3.) -- cycle;
			\draw [line width=2.pt] (22.,-6.)-- (34.,0.);
			\draw [line width=2.pt] (21.,-9.)-- (35.,-2.);
			\draw [line width=2.pt] (22.,-11.)-- (36.,-4.);
			\draw [line width=2.pt,color=zzttqq] (24.,-5.)-- (24.,-7.);
			\draw [line width=2.pt,color=zzttqq] (24.,-7.)-- (25.,-7.);
			\draw [line width=2.pt,color=zzttqq] (25.,-7.)-- (25.,-5.);
			\draw [line width=2.pt,color=zzttqq] (25.,-5.)-- (24.,-5.);
			\draw [line width=2.pt,color=zzttqq] (26.,-4.)-- (26.,-6.);
			\draw [line width=2.pt,color=zzttqq] (26.,-6.)-- (27.,-6.);
			\draw [line width=2.pt,color=zzttqq] (27.,-6.)-- (27.,-4.);
			\draw [line width=2.pt,color=zzttqq] (27.,-4.)-- (26.,-4.);
			\draw [line width=2.pt,color=zzttqq] (23.,-8.)-- (23.,-10.);
			\draw [line width=2.pt,color=zzttqq] (23.,-10.)-- (24.,-10.);
			\draw [line width=2.pt,color=zzttqq] (24.,-10.)-- (24.,-8.);
			\draw [line width=2.pt,color=zzttqq] (24.,-8.)-- (23.,-8.);
			\draw [line width=2.pt,color=zzttqq] (22.,-6.)-- (22.,-8.);
			\draw [line width=2.pt,color=zzttqq] (22.,-8.)-- (23.,-8.);
			\draw [line width=2.pt,color=zzttqq] (23.,-8.)-- (23.,-6.);
			\draw [line width=2.pt,color=zzttqq] (23.,-6.)-- (22.,-6.);
			\draw [line width=2.pt,color=zzttqq] (28.,-3.)-- (28.,-5.);
			\draw [line width=2.pt,color=zzttqq] (28.,-5.)-- (29.,-5.);
			\draw [line width=2.pt,color=zzttqq] (29.,-5.)-- (29.,-3.);
			\draw [line width=2.pt,color=zzttqq] (29.,-3.)-- (28.,-3.);
			\draw [line width=2.pt,color=zzttqq] (30.,-2.)-- (30.,-4.);
			\draw [line width=2.pt,color=zzttqq] (30.,-4.)-- (31.,-4.);
			\draw [line width=2.pt,color=zzttqq] (31.,-4.)-- (31.,-2.);
			\draw [line width=2.pt,color=zzttqq] (31.,-2.)-- (30.,-2.);
			\draw [line width=2.pt,color=zzttqq] (32.,-1.)-- (32.,-3.);
			\draw [line width=2.pt,color=zzttqq] (32.,-3.)-- (33.,-3.);
			\draw [line width=2.pt,color=zzttqq] (33.,-3.)-- (33.,-1.);
			\draw [line width=2.pt,color=zzttqq] (33.,-1.)-- (32.,-1.);
			\draw [line width=2.pt,color=zzttqq] (25.,-7.)-- (25.,-9.);
			\draw [line width=2.pt,color=zzttqq] (25.,-9.)-- (26.,-9.);
			\draw [line width=2.pt,color=zzttqq] (26.,-9.)-- (26.,-7.);
			\draw [line width=2.pt,color=zzttqq] (26.,-7.)-- (25.,-7.);
			\draw [line width=2.pt,color=zzttqq] (27.,-6.)-- (27.,-8.);
			\draw [line width=2.pt,color=zzttqq] (27.,-8.)-- (28.,-8.);
			\draw [line width=2.pt,color=zzttqq] (28.,-8.)-- (28.,-6.);
			\draw [line width=2.pt,color=zzttqq] (28.,-6.)-- (27.,-6.);
			\draw [line width=2.pt,color=zzttqq] (29.,-5.)-- (29.,-7.);
			\draw [line width=2.pt,color=zzttqq] (29.,-7.)-- (30.,-7.);
			\draw [line width=2.pt,color=zzttqq] (30.,-7.)-- (30.,-5.);
			\draw [line width=2.pt,color=zzttqq] (30.,-5.)-- (29.,-5.);
			\draw [line width=2.pt,color=zzttqq] (31.,-4.)-- (31.,-6.);
			\draw [line width=2.pt,color=zzttqq] (31.,-6.)-- (32.,-6.);
			\draw [line width=2.pt,color=zzttqq] (32.,-6.)-- (32.,-4.);
			\draw [line width=2.pt,color=zzttqq] (32.,-4.)-- (31.,-4.);
			\draw [line width=2.pt,color=zzttqq] (33.,-3.)-- (33.,-5.);
			\draw [line width=2.pt,color=zzttqq] (33.,-5.)-- (34.,-5.);
			\draw [line width=2.pt,color=zzttqq] (34.,-5.)-- (34.,-3.);
			\draw [line width=2.pt,color=zzttqq] (34.,-3.)-- (33.,-3.);
			\begin{scriptsize}
			\draw [fill=black] (22.,-6.) circle (6 pt);
			\draw [fill=black] (24.,-5.) circle (6pt);
			\draw [fill=black] (26.,-4.) circle (6pt);
			\draw [fill=black] (23.,-8.) circle (6pt);
			\draw [fill=black] (25.,-7.) circle (6pt);
			\draw [fill=black] (27.,-6.) circle (6pt);
			\draw [fill=black] (28.,-3.) circle (6pt);
			\draw [fill=black] (29.,-5.) circle (6pt);
			\draw [fill=black] (30.,-2.) circle (6pt);
			\draw [fill=black] (31.,-4.) circle (6pt);
			\draw [fill=black] (32.,-1.) circle (6pt);
			\draw [fill=black] (33.,-3.) circle (6pt);
			\draw [fill=black] (24.,-10.) circle (6pt);
			\draw [fill=black] (26.,-9.) circle (6pt);
			\draw [fill=black] (28.,-8.) circle (6pt);
			\draw [fill=black] (30.,-7.) circle (6pt);
			\draw [fill=black] (32.,-6.) circle (6pt);
			\draw [fill=black] (34.,-5.) circle (6pt);
			\end{scriptsize}
			\end{tikzpicture}
			\caption{Disjoint $2 \times 3$ grids}\label{fig2}
	\end{figure}
	\begin{construction}{\label{NearGridED}}\textbf{Existence of efficiently dominatable near-Grid graphs:} \hfill\\
	Note that in figure \ref{fig_P11P11}, the intersections of two lines correspond to vertices. The subtructure $P_{11} \Box P_{11}$ of an infinite grid is highlighted using bold lines. 
	Independently examining the grid $ P_{11} \Box P_{11} $, it can be observed that there exists an $ F(P_{11} \Box P_{11}) $-set resulting in voids which lie on the boundaries as shown in figure \ref{fig_P11P11}. These voids can be dominated by adding new vertices, one to dominate each void so that the resultant graph is efficiently dominatable. In general, given a grid $ P_n \Box P_n $, where $ n \ge 7 $, if there $ k $ voids generated by an $ F(P_n \Box P_n) $-set, then by arranging them suitably to lie on the boundaries, we can add $ k $ new vertices and make them adjacent to one  void each.  Then, the resultant graph becomes efficiently dominatable.  This results in a new class of efficiently dominatable graphs which are nearly grid graphs.
\end{construction}

	\subsubsection{Infinite Triangular grid}\label{itg}
	A triangular grid graph is formed by triangular tessellations. 
	We label the $j^{th}$ vertex in the $i^{th}$ row of a triangular grid as $v_{i,j}$ (refer to figure \ref{fig3}). By following the same procedure explained for an infinte rectangular grid in Section \ref{irg}, we can construct an \textit{EDS} for an infinite triangular grid. For ease of reference, we refer to an infinite triangular grid by $T_\infty$.
	\begin{figure}[!h]
		\centering
		\begin{tikzpicture}[line cap=round,line join=round,>=triangle 45,x=1.0cm,y=1.0cm,rotate=90]
		\clip(-1.,-5.) rectangle (10.,5.);
		\draw [line width=2.pt] (0.,0.)-- (4.330127018922193,2.5);
		\draw [line width=2.pt] (4.330127018922193,2.5)-- (8.660254037844386,0.);
		\draw [line width=2.pt] (8.660254037844386,0.)-- (4.330127018922193,-2.5);
		\draw [line width=2.pt] (4.330127018922193,-2.5)-- (0.,0.);
		\draw [line width=2.pt] (0.8660254037844386,-0.5)-- (5.196152422706632,2.);
		\draw [line width=2.pt] (1.7320508075688772,-1.)-- (6.06217782649107,1.5);
		\draw [line width=2.pt] (2.598076211353316,-1.5)-- (6.928203230275509,1.);
		\draw [line width=2.pt] (3.4641016151377544,-2.)-- (7.794228634059947,0.5);
		\draw [line width=2.pt] (5.196152422706632,-2.)-- (0.8660254037844386,0.5);
		\draw [line width=2.pt] (6.06217782649107,-1.5)-- (1.7320508075688772,1.);
		\draw [line width=2.pt] (6.928203230275509,-1.)-- (2.598076211353316,1.5);
		\draw [line width=2.pt] (7.794228634059947,-0.5)-- (3.4641016151377544,2.);
		\draw [line width=2.pt] (0.8660254037844386,-0.5)-- (0.8660254037844386,0.5);
		\draw [line width=2.pt] (1.7320508075688772,-1.)-- (1.7320508075688772,1.);
		\draw [line width=2.pt] (2.598076211353316,-1.5)-- (2.598076211353316,1.5);
		\draw [line width=2.pt] (3.4641016151377544,2.)-- (3.4641016151377544,-2.);
		\draw [line width=2.pt] (4.330127018922193,-2.5)-- (4.330127018922193,2.5);
		\draw [line width=2.pt] (5.196152422706632,-2.)-- (5.196152422706632,2.);
		\draw [line width=2.pt] (6.06217782649107,-1.5)-- (6.06217782649107,1.5);
		\draw [line width=2.pt] (6.928203230275509,-1.)-- (6.928203230275509,1.);
		\draw [line width=2.pt] (7.794228634059947,-0.5)-- (7.794228634059947,0.5);
		\draw [line width=2.pt] (3.4641016151377544,0.)-- (3.4641016151377544,-1.);
		\draw [line width=2.pt] (2.598076211353316,-0.5)-- (3.4641016151377544,0.);
		\draw [line width=2.pt] (2.598076211353316,-0.5)-- (3.4641016151377544,0.);
		\draw [line width=2.pt] (2.598076211353316,-0.5)-- (3.4641016151377544,0.);
		\draw [line width=2.pt] (2.598076211353316,-0.5)-- (3.4641016151377544,0.);
		\draw [line width=2.pt] (3.4641016151377544,0.)-- (3.4641016151377544,-1.);
		\draw [line width=2.pt] (4.330127018922193,-0.5)-- (2.598076211353316,0.5);
		\draw [line width=2.pt] (3.4641016151377544,1.)-- (2.598076211353316,1.5);
		\draw [line width=2.pt] (3.4641016151377544,0.)-- (3.4641016151377544,-1.);
		\draw [line width=2.pt] (3.4641016151377544,-1.)-- (4.330127018922193,-0.5);
		\begin{scriptsize}
		\draw [fill=black] (2.598076211353316,-0.5) circle (2.5pt);
		\draw [fill=black] (3.4641016151377544,0.) circle (2.5pt);
		\draw [fill=black] (3.4641016151377544,-1.) circle (2.5pt);
		\draw [fill=black] (4.330127018922193,0.5) circle (2.5pt);
		\draw [fill=black] (4.330127018922193,-0.5) circle (2.5pt);
		\draw [fill=black] (2.598076211353316,0.5) circle (2.5pt);
		\draw [fill=black] (2.598076211353316,1.5) circle (2.5pt);
		\draw[color=black] (2.7065679169779537,1.8027641849391944) node[xshift=-6pt] {{\large{$v_{8,1}$}}};
		\draw [fill=black] (3.4641016151377544,1.) circle (6pt);
		\draw [fill=black] (4.330127018922193,-1.5) circle (2.5pt);
		\draw [fill=black] (3.4641016151377544,-2.) circle (2.5pt);
		\draw[color=black] (3.572384361983194,-2.4086025086931707) node[xshift=+3pt] {{\large{$v_{7,5}$}}};
		\draw [fill=black] (3.4641016151377544,2.) circle (2.5pt);
		\draw[color=black] (3.572384361983194,2.2998069589236847) node[xshift=-6pt] {{\large{$v_{7,1}$}}};
		\draw [color=black] (4.330127018922193,-2.5) circle (6pt);
		\draw[color=black] (4.438200806988434,-2.8056452826776605) node[xshift=+3pt] {\large{$v_{6,6}$}};
		\draw [fill=black] (4.330127018922193,1.5) circle (2.5pt);
		\draw [color=black] (4.330127018922193,2.5) circle (6pt);
		\draw[color=black] (4.438200806988434,2.7968497329081745) node[xshift=-6pt] {\large{$v_{6,1}$}};
		\draw [fill=black] (5.196152422706632,-2.) circle (2.5pt);
		\draw[color=black] (5.304017251993674,-2.2086025086931707) node[xshift=+3pt] {\large{$v_{5,5}$}};
		\draw [fill=black] (5.196152422706632,-1.) circle (6pt);
		\draw [fill=black] (5.196152422706632,0.) circle (2.5pt);
		\draw [fill=black] (5.196152422706632,1.) circle (2.5pt);
		\draw [fill=black] (5.196152422706632,2.) circle (2.5pt);
		\draw[color=black] (5.304017251993674,2.2998069589236847) node[xshift=-6pt] {\large{$v_{5,1}$}};
		\draw [fill=black] (6.06217782649107,-1.5) circle (2.5pt);
		\draw[color=black] (6.169833696998914,-1.795526096838213) node[xshift=+3pt] {\large{$v_{4,4}$}};
		\draw [fill=black] (6.06217782649107,-0.5) circle (2.5pt);
		\draw [fill=black] (6.06217782649107,0.5) circle (2.5pt);
		\draw [fill=black] (6.06217782649107,1.5) circle (6pt);
		\draw[color=black] (6.169833696998914,1.8027641849391944) node[xshift=-7pt,yshift=7pt] {\large{$v_{4,1}$}};
		\draw [fill=black] (6.928203230275509,-1.) circle (2.5pt);
		\draw[color=black] (7.075734236680323,-1.2583992281775546) node[xshift=+3pt] {\large{$v_{3,3}$}};
		\draw [fill=black] (6.928203230275509,0.) circle (2.5pt);
		\draw [fill=black] (1.7320508075688772,0.) circle (2.5pt);
		\draw [fill=black] (1.7320508075688772,1.) circle (2.5pt);
		\draw[color=black] (1.8808355666488823,1.3297718677604056) node[xshift=-6pt] {\large{$v_{9,1}$}};
		\draw [fill=black] (0.8660254037844386,0.5) circle (6pt);
		\draw[color=black] (1.0150191216436422,0.927290937759155) node[xshift=-6pt] {\large{$v_{10,1}$}};
		\draw [fill=black] (7.794228634059947,-0.5) circle (6pt);
		\draw[color=black] (7.941550681685563,-0.9613564541930646) node[xshift=+3pt] {\large{$v_{2,2}$}};
		\draw [fill=black] (8.660254037844386,0.) circle (2.5pt);
		\draw[color=black] (8.807367126690803,0.3356863197914255) node[xshift=-3pt] {\large{$v_{1,1}$}};
		\draw [fill=black] (7.794228634059947,0.5) circle (2.5pt);
		\draw[color=black] (7.941550681685563,0.8327290937759155) node [xshift=-3pt]{\large{$v_{2,1}$}};
		\draw [fill=black] (6.928203230275509,1.) circle (2.5pt);
		\draw[color=black] (7.075734236680323,1.3297718677604056) node[xshift=-3pt]{\large{$v_{3,1}$}};
		\draw [fill=black] (2.598076211353316,-1.5) circle (6pt);
		\draw[color=black] (2.746652011654122,-2.086025086931707) node {\large{$v_{8,4}$}};
		\draw [fill=black] (1.7320508075688772,-1.) circle (2.5pt);
		\draw[color=black] (1.8808355666488823,-1.4583992281775546) node[xshift=+3pt] {\large{$v_{9,3}$}};
		\draw [fill=black] (0.8660254037844386,-0.5) circle (2.5pt);
		\draw[color=black] (1.0150191216436422,-1.0613564541930646) node[xshift=+3pt]{\large{$v_{10,2}$}};
		\draw [fill=black] (0.,0.) circle (2.5pt);
		\draw[color=black] (-0.4150191216436422,0.0613564541930646) node {\large{$v_{11,1}$}};;
		\end{scriptsize}
		\end{tikzpicture}
		\caption{Labeling of a Triangular Grid}
		\label{fig3}
	\end{figure}
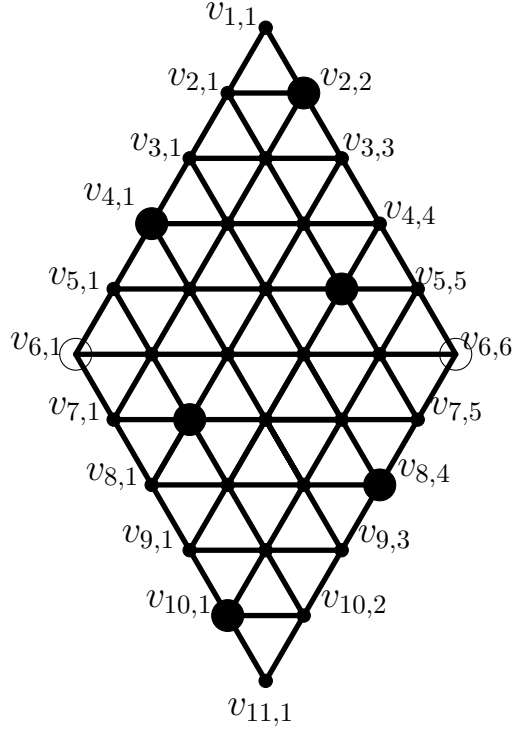	
	\begin{theorem}
		An infinite triangular Grid is efficiently dominatable.
	\end{theorem}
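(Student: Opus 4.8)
The plan is to carry over, essentially verbatim, the mechanism used for the infinite rectangular grid in Section~\ref{irg}, with the index-$5$ sublattice there replaced by an index-$7$ one adapted to the triangular metric. First I would fix coordinates: with the labelling of Figure~\ref{fig3} extended to the infinite grid, identify $V(T_\infty)$ with $\mathbb{Z}^2$ so that each vertex $v=(i,j)$ is adjacent exactly to $v\pm(1,0)$, $v\pm(0,1)$, $v\pm(1,1)$ (the three axis directions of the triangular tessellation). Then every vertex has degree $6$, so $|N[v]|=7$ for all $v$, and an $EDS$ is precisely a set $S$ for which the $7$-vertex cells $N[s]$, $s\in S$, tile $V(T_\infty)$; this is the structure I would aim to build explicitly.

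Next I would exhibit such an $S$ as a single coset $v_{i,j}+L$ of the rank-two sublattice $L=\langle(3,1),(-1,2)\rangle\le\mathbb{Z}^2$, which has index $\lvert\det\rvert=7$. Concretely, following the recipe of Section~\ref{irg}: start from an arbitrary vertex $v_{i,j}$, and then repeatedly adjoin to $S$, for every vertex already selected, the six translates by $\pm(3,1)$, $\pm(1,-2)$, $\pm(2,3)$, which are exactly the nonzero elements of $L$ at triangular distance $3$ from the origin and are the triangular analogue of the ``knight-move'' vertices $v_{i+1,j-2},v_{i+2,j+1},v_{i-1,j+2},v_{i-2,j-1}$ used in the rectangular case. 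Iterating produces $S=v_{i,j}+L$.

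Two finite checks then finish the argument. For the $2$-packing property it suffices to verify that every nonzero element of $L$ is at graph-distance at least $3$ from the origin; since there are only $6$ vectors at distance $1$ and $12$ at distance $2$, one simply checks (e.g.\ via the homomorphism $\varphi\colon\mathbb{Z}^2\to\mathbb{Z}_7$, $\varphi(x,y)=2x+y\pmod 7$, whose kernel is $L$) that none of these $18$ vectors lies in $L$. For domination, since there are exactly $7$ cosets of $L$ and $|N[v]|=7$, it is enough that the seven vertices of $N[v_{i,j}]$ lie in seven distinct cosets; under $\varphi$ the vertices $(0,0),(0,\pm1),(\pm1,0),(1,1),(-1,-1)$ map bijectively onto $\{0,1,\dots,6\}$, so the translates $s+N[v_{i,j}]$, $s\in L$, tile $V(T_\infty)$ with no void -- the triangular counterpart of the disjoint $2\times3$ grids of Figure~\ref{fig2}, the tile here being the $7$-vertex hexagonal cell $N[v_{i,j}]$. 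Combining the two checks, $S$ is a perfect code, hence $T_\infty$ is efficiently dominatable, and (as in Section~\ref{irg}) the pattern continues iteratively without voids precisely because $T_\infty$ is unbounded on all sides.

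I expect the only genuine obstacle to be pinning down the correct sublattice: one must hit the unique index equal to $|N[v]|$ and, more delicately, ensure that $N[v]$ is an \emph{exact} transversal of $\mathbb{Z}^2/L$ -- a differently chosen but equally-indexed lattice (for instance one generated by distance-$3$ vectors pointing in the ``wrong'' directions) can still be a $2$-packing yet fail the transversal test and leave the grid undominated. Once the lattice above is fixed, the remaining two verifications are routine finite computations, and the ``no void, pattern continues'' conclusion is immediate from vertex-transitivity of $T_\infty$.
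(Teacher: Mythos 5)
Your proposal is correct, and at the top level it follows the same strategy as the paper: generate the dominating set from a single arbitrary vertex by repeatedly translating along distance-$3$ vectors, obtain a translation-invariant $2$-packing, and then verify that the closed neighbourhoods tile the grid with no voids. The difference lies in how the two verifications are carried out. The paper works geometrically: it adjoins the four vertices $v_{i\pm2,j}$, $v_{i+1,j-2}$, $v_{i-1,j+2}$, observes that the selected vertices lie on diagonal lines, and argues that pairing consecutive diagonals into disjoint $3\times 2$ triangular blocks dominates everything in between (Figures \ref{fig5} and \ref{fig4}), in parallel with the $2\times 3$ blocks of Figure \ref{fig2} in the rectangular case. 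You instead name the sublattice $L=\langle(3,1),(-1,2)\rangle$ of index $7=|N[v]|$ explicitly and reduce both the packing and the domination claims to one finite check, namely that $\varphi(x,y)=2x+y \bmod 7$ is injective on $N[(0,0)]$; this is a complete and easily auditable argument. Your closing caveat --- that a set of distance-$3$ generators must in addition span a lattice of the correct index and make $N[v]$ an exact transversal --- is precisely where the paper's write-up is fragile: under the adjacency conventions of Figure \ref{fig3} (where $v_{i,j}\sim v_{i\pm1,j}$), the offset $(2,0)$ corresponding to $v_{i+2,j}$ has graph distance $2$, not $3$, and the four offsets as literally written generate a sublattice of determinant $4$ rather than $7$, so the stated generators must be corrected before the paper's tiling argument can go through. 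Your homomorphism test enforces the right index and the transversal property automatically, so while the construction is the same in spirit, your algebraic packaging is the more robust of the two.
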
	
	\begin{proof}
		We construct an EDS, say $S$, of $T_\infty $ as follows: Choose an arbitrary vertex, say $v_{i,j}$, and let $v_{i,j} \in S$. Select the four vertices  $v_{i-1,j+2}$, $v_{i+1,j-2}$, $v_{i+2,j}$ and $v_{i-2,j}$ around  $v_{i,j}$ as shown in figure \ref{fig5}. Clearly, these four vertices are at distance three from $v_{i,j}$ and are at mutually at distance at least three (refer to figure \ref{fig5}). Hence, the set $ S = S \cup \{v_{i-1,j+2}, v_{i+1,j-2}, v_{i+2,j}, v_{i-2,j}\}$ will be a $ 2 $-packing of $ T_{\infty} $. Next, for each vertiex $ v_{p, q} \in S$, choose another set of four vertices in the same manner and add them to $S$. The chosen vertices are in such a way that they are mutually at distance at least three and they all fall on the diagonal lines as shown in the figure \ref{fig4}. Hence, the set $ S $ so generated forms a $ 2 $-packing of $ T_{\infty} $.  Further, pairing the vertices of consecutive diagonal lines to form opposite corners of disjoint $3 \times 2$ triangular grids (that is, grids containing 3 rows with each row containing 2 vertices), it can be observed that these vertices dominate every vertex between the diagonal lines and no voids are created. As the grid is infinite, it is possible to iteratively continue this pattern of choosing vertices  along all directions and add to $ S $. Based on the way the vertices are selected, it can be observed the set $ S $ obtained at each iteration is a $ 2 $-packing of $ T_{\infty} $ and all vertices between the diagonal lines are dominated (refer to figure \ref{fig4}).  Hence, the final set $S$ so obtained will be an \textit{EDS} of $ T_{\infty} $. 
	\end{proof}
	\definecolor{ffffff}{rgb}{1.,1.,1.}
	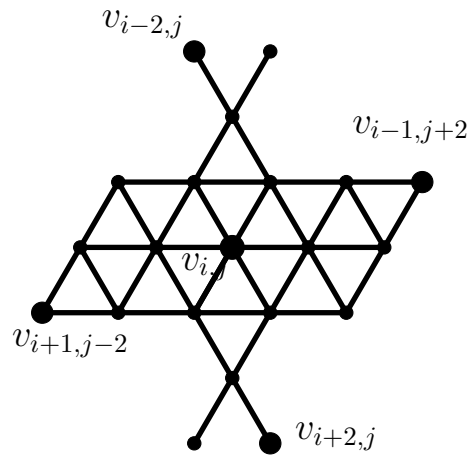
\begin{figure}[!h]
		\centering
		\begin{tikzpicture}[line cap=round,line join=round,>=triangle 45,x=1.0cm,y=1.0cm,rotate=90]
		\clip(6.42237,-3.084296711794365) rectangle (12.706971043452594,3.572538769139867);
		\draw [line width=2.pt] (8.660254037844386,3.)-- (8.660254037844386,-1.);
		\draw [line width=2.pt] (6.928203230275509,0.)-- (10.392304845413264,2.);
		\draw [line width=2.pt] (9.526279441628825,2.5)-- (9.526279441628825,-1.5);
		\draw [line width=2.pt] (10.392304845413264,2.)-- (10.392304845413264,-2.);
		\draw [line width=2.pt] (8.660254037844386,-1.)-- (10.392304845413264,-2.);
		\draw [line width=2.pt] (8.660254037844386,0.)-- (10.392304845413264,-1.);
		\draw [line width=2.pt] (8.660254037844386,1.)-- (10.392304845413264,0.);
		\draw [line width=2.pt] (8.660254037844386,2.)-- (10.392304845413264,1.);
		\draw [line width=2.pt] (8.660254037844386,3.)-- (10.392304845413264,2.);
		\draw [line width=2.pt] (10.392304845413264,0.)-- (12.12435565298214,1.);
		\draw [line width=2.pt] (7.794228634059947,0.5)-- (8.660254037844386,0.);
		\draw [line width=2.pt] (10.392304845413264,1.)-- (11.258330249197702,0.5);
		\draw [line width=2.pt] (7.794228634059947,0.5)-- (6.928203230275509,1.);
		\draw [line width=2.pt] (11.258330249197702,0.5)-- (12.12435565298214,0.);
		\draw [line width=2.pt] (8.660254037844386,1.)-- (9.526279441628825,0.5);
		\draw [line width=2.pt] (8.660254037844386,0.)-- (10.392304845413264,1.);
		\draw [line width=2.pt] (8.660254037844386,-1.)-- (10.392304845413264,0.);
		\draw [line width=2.pt] (8.660254037844386,2.)-- (9.526279441628825,2.5);
		\draw [line width=2.pt] (9.526279441628825,-1.5)-- (10.392304845413264,-1.);
		\begin{scriptsize}
		\draw [fill=black] (9.526279441628825,0.5) circle (4.5pt);
		\draw[color=black] (9.620057225905109,0.85395510257168) node[yshift=-10pt] {\large{$v_{i,j}$}};
		\draw [fill=black] (8.660254037844386,1.) circle (2.5pt);
		\draw [fill=black] (8.660254037844386,0.) circle (2.5pt);
		\draw [fill=black] (7.794228634059947,0.5) circle (2.5pt);
		\draw [fill=black] (10.392304845413264,1.) circle (2.5pt);
		\draw [fill=black] (10.392304845413264,0.) circle (2.5pt);
		\draw [fill=black] (11.258330249197702,0.5) circle (2.5pt);
		\draw [fill=black] (12.12435565298214,1.) circle (4.0pt);
		\draw[color=black] (11.714417686259077,1.3196095867573205) node[xshift=-10pt, yshift=22pt] {\large{$v_{i-2,j}$}};
		\draw [fill=black] (6.928203230275509,0.) circle (4.0pt);
		\draw[color=black] (7.0256967655511415,0.32177854921666205) node[xshift=34pt] {\large{$v_{i+2,j}$}};
		\draw [fill=black] (9.526279441628825,1.5) circle (2.5pt);
		\draw [fill=black] (8.660254037844386,2.) circle (2.5pt);
		\draw [fill=black] (9.526279441628825,2.5) circle (2.5pt);
		\draw [fill=black] (10.392304845413264,2.) circle (2.5pt);
		\draw [fill=black] (8.660254037844386,3.) circle (4.0pt);
		\draw[color=black] (8.25527040578712,2.6285760756725127) node {\large{$v_{i+1,j-2}$}};
		\draw [fill=black] (9.526279441628825,-0.5) circle (2.5pt);
		\draw [fill=black] (9.526279441628825,-1.5) circle (2.5pt);
		\draw [fill=black] (10.392304845413264,-2.) circle (4.0pt);
		\draw[color=black] (11.084844046023099,-1.6738835258646547) node[xshift=5pt]{\large{$v_{i-1,j+2}$}};
		\draw [fill=black] (10.392304845413264,-1.) circle (2.5pt);
		\draw [fill=black] (8.660254037844386,-1.) circle (2.5pt);
		\draw [fill=black] (6.928203230275509,1.) circle (2.5pt);
		\draw [fill=black] (12.12435565298214,0.) circle (2.5pt);
		\end{scriptsize}
		\end{tikzpicture}		
		\caption{Choice of vertices in an infinite triangular grid}
		\label{fig5}
	\end{figure}	
	\begin{figure}[!h]
		\centering
		\includegraphics[angle=90,scale=0.85,height=8.5cm]{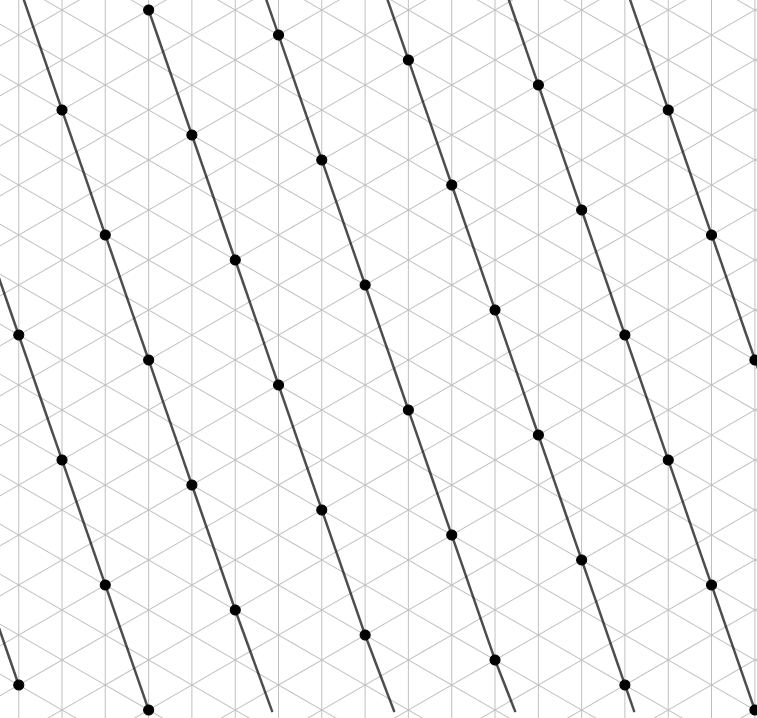}
		\caption{EDS of an Infinite triangular grid}
		\label{fig4}
	\end{figure}
	
	\subsubsection{Infinite hexagonal grid} \label{hex1}
	A hexagonal grid graph is formed by tessellations of hexagons.
	We know that $C_6$ is an efficiently dominatable graph and any two diagonally opposite vertices form an \textit{EDS} of $ C_6 $. This property forms the basis for constructing an EDS for an infinite hexagonal grid.  For conveninece, we use $ H_{\infty} $ to refer to an infinite hexagonal grid.
	\begin{figure}[!h]
		\centering
		\includegraphics[scale=0.7]{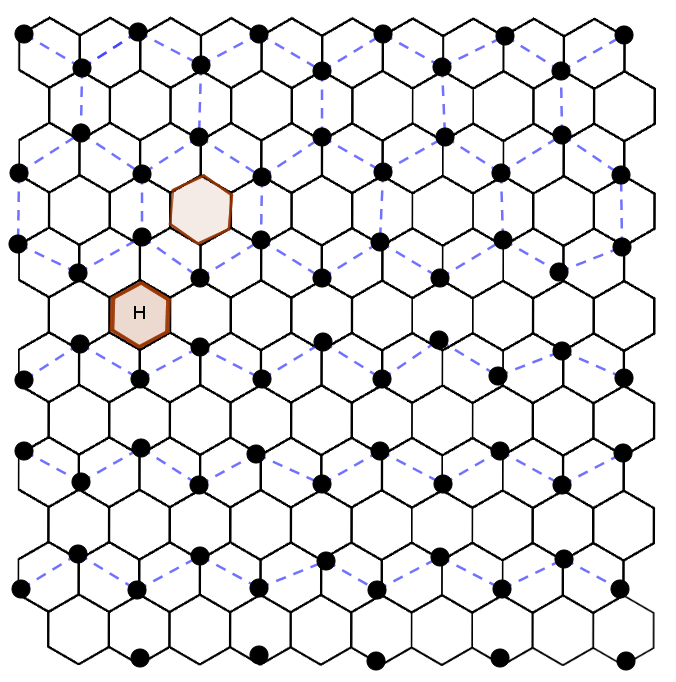}
		\caption{EDS of an infinite hexagonal  grid}
		\label{fig8}
	\end{figure}
	\begin{theorem}
		An infinite hexagonal grid is efficiently dominatable.		
	\end{theorem}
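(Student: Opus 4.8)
The plan is to follow the same template used above for the infinite rectangular and triangular grids, but now exploiting the fact recalled just before the statement: in a hexagonal face $C_6$, any two diagonally opposite vertices form an \textit{EDS} of that $6$-cycle. Accordingly I would build the set $S$ out of diagonally opposite pairs taken from a carefully chosen, periodically placed sub-collection of hexagonal faces of $H_\infty$, and then verify that $S$ is a $2$-packing with no voids.

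First I would fix notation: $H_\infty$ is $3$-regular, its faces are $6$-cycles, and by a standard Euler-type count there are on average two vertices per hexagonal face. Writing a face $F$ as $u_1u_2\cdots u_6$ in cyclic order, one has $N[u_1]\cap F=\{u_6,u_1,u_2\}$ and $N[u_4]\cap F=\{u_3,u_4,u_5\}$, which partition $V(F)$; moreover $u_1$ and $u_4$ each have exactly one further neighbour, say $u_1'$ and $u_4'$, lying outside $F$. Thus the diagonally opposite pair $\{u_1,u_4\}$ ``owns'' a cell of $8$ vertices, namely the six vertices of $F$ together with $u_1'$ and $u_4'$, and inside this cell every vertex is dominated exactly once. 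Next I would exhibit the periodic pattern of selected faces $\mathcal F$: one selected face per block of four faces, with the blocks shifted between consecutive rows of hexagons, so that the outside-neighbours of the chosen diagonal pairs point into the unselected faces (this is the pattern drawn in figure~\ref{fig8}). Set $S=\bigcup_{F\in\mathcal F}\{a_F,b_F\}$, where $a_F,b_F$ is the chosen diagonally opposite pair of $F$.

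I would then verify two things. (a) $S$ is a $2$-packing: within a selected face $d(a_F,b_F)=3$, and for two distinct selected faces the spacing of the pattern forces $d(\cdot,\cdot)\ge 3$; I would make this rigorous either through a short distance estimate in a coordinate system on the honeycomb, or by checking directly that the $8$-vertex cells together with their immediate boundaries are pairwise non-adjacent. (b) $S$ dominates $V(H_\infty)$ without repetition: the $8$-vertex cells of the selected faces are pairwise disjoint and their union is all of $V(H_\infty)$ — equivalently, every vertex not lying on a selected face equals the outside-neighbour $a_F'$ or $b_F'$ of exactly one $F\in\mathcal F$. Because the configuration is periodic, both claims reduce to checking a single fundamental domain. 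Finally, as in the earlier theorems, since $H_\infty$ is unbounded the selection extends iteratively in all directions, so the resulting $S$ is an \textit{EDS} and $H_\infty$ is efficiently dominatable.

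The main obstacle is step (b): pinning down the periodic pattern of selected hexagons so that the $8$-vertex cells tile $V(H_\infty)$ exactly, with no uncovered vertex and no vertex covered twice — this is precisely the perfect-code condition. While figure~\ref{fig8} makes it visually convincing, a clean argument really needs either explicit congruences describing the selected faces and their diagonal pairs in honeycomb coordinates (so that ``dominated exactly once'' becomes an arithmetic identity), or a careful finite case analysis over one period. By contrast, the $2$-packing check in step (a) is comparatively routine once the coordinates are in place, since it only requires the crude bound $d(\cdot,\cdot)\ge 3$ between vertices of $S$.
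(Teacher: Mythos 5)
Your construction rests on the same two pillars as the paper's proof --- the observation that a diagonal pair is an \textit{EDS} of each hexagonal face, and the periodic pattern of figure \ref{fig8} --- but you package the verification differently, and the difference is worth recording. The paper grows $S$ vertex-by-vertex (for each $v\in S$ it adds the antipode of $v$ in \emph{all three} hexagons containing $v$) and then checks coverage hexagon-by-hexagon: every face ends up containing either a diagonal pair of $S$ (and is dominated internally) or no vertex of $S$ (and is dominated entirely from outside). You instead select a sub-family $\mathcal F$ of faces of density $1/4$, take one diagonal pair per selected face, and check that the resulting $8$-vertex cells $N[a_F]\cup N[b_F]$ partition $V(H_\infty)$. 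Your bookkeeping is cleaner in one respect: since $v\in N[s]$ iff $s\in N[v]$, the statement that these cells partition $V(H_\infty)$ is literally equivalent to $|N[v]\cap S|=1$ for all $v$, so your step (b) subsumes step (a) and the whole proof reduces to exhibiting the tiling. Two caveats. First, in the code of figure \ref{fig8} each vertex of $S$ is antipodal to a vertex of $S$ in all three of its incident faces, so three quarters of the hexagons carry a diagonal pair of $S$ (and one quarter are disjoint from $S$); your $\mathcal F$, at density $1/4$, is therefore a proper sub-family --- in effect a perfect matching of $S$ by antipodal faces --- and you should not expect the unselected faces to avoid $S$; what matters is disjointness of the cells, not exclusivity of the selected faces. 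Second, the step you flag as the obstacle (pinning down $\mathcal F$ by explicit congruences and checking the tiling on one fundamental domain) is indeed the entire content of the theorem, but the paper delegates exactly the same verification to its figure, so your proposal is no less complete than the published argument. A concrete way to close the gap for both: in a brick-wall drawing of $H_\infty$ (rows of horizontal paths joined by vertical rungs at alternating columns) one can take $S$ to be the vertices in columns $\equiv 0\pmod 4$ of rows $i\equiv 0,3\pmod 4$ together with the vertices in columns $\equiv 2\pmod 4$ of rows $i\equiv 1,2\pmod 4$; the conditions $|N[v]\cap S|=1$ then become a finite check of residues modulo $4$.
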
	
	\begin{proof}
		We construct an EDS, say $S$, of $ H_{\infty} $ as follows: Note that each vertex in an infinite hexagonal grid lies in (or common to) three adjacent hexagons. Choose an arbitrary vertex, say $v$, and let $v \in S$. Next, from each of the three hexagons to which $ v $ belongs, select the vertices which are at distance $3$ from $v$ (that is, the vertex diagonally opposite to $ v $ in each hexagon containing $ v $). Add these three vertices to $S$. Note that the set $ S $ generated at this stage is a $ 2 $-packing as all its vertices are mutually at distance at least three in $ H_{\infty} $ (refer to figure \ref{fig8}). Next, for each of the newly added vertices, repeat the process of choosing the diagonally opposite vertices from the hexagons they belong to. This process of constructing a $2$-packing for $ H_{\infty} $ results in a structure as shown in figure \ref{fig8}. It can be observed that all those vertices in $ S $ are mutually at distance at least three and they lie on the zig-zag lines. It can be noted that for any hexagon, either two diagonally opposite vertices lie on these lines or no vertex lies on these lines. Suppose no vertex of a hexagon $H$ belongs to $S$, then each vertex of $H$ is dominated by a unique neighbor outside $H$.  Thereby, the set $ S $ so generated dominates all vertices of $ H_{\infty} $ and hence, $S$ is a \textit{EDS} of $ H_{\infty} $.		
	\end{proof}
	\section{Conclusion}
	In this paper, the concept of efficient domination has been studied on lattice graphs, namely rectangular grid graphs, triangular grid graphs, and hexagonal grid graphs. 
	A characterization is obtained for efficiently dominatable finite rectangular grids. A finite square grid $ P_n \Box P_n $ has been shown to efficiently dominatable if and only if $ n = 4 $. For those finite square grids which are not efficiently dominatable, a lower bound on its efficient domination number is derived.  A contructive procedure is given to derived these lower bounds and the construction could be extended to both infinite rectangular grids and infinite triangular grids to prove that they are efficiently dominatable.  The lower bound derived is found to be attained at most values of $ n $, based on which a conjecture is stated. Another constructive procedure has been discussed to study the notion of efficient domination in infinite hexagonal grids. The study on finite triangular and hexagonal grid structures is in progress.  


	%
	%

	
	
	\newpage

	
\end{document}